\documentclass{article} 
\usepackage{nips15submit_e,times}
\usepackage{hyperref}
\usepackage{url}

\usepackage{amssymb,amsmath,amsfonts,amsthm,dsfont,mathrsfs,color,bm,enumerate,multirow}
\theoremstyle{definition}
\newtheorem{Thrm}{Theorem}
\newtheorem{Lmm}{Lemma}
\newtheorem{Def}{Definition}

\newtheorem{Rmk}{Remark}

\newcommand{\risk}{\mathfrak{R}}

\usepackage{graphicx} 
\usepackage{pgfplots,subfigure} 
\usepackage{epstopdf,cite}

\usepackage{natbib}

\title{Distributionally Robust Logistic Regression}

\author{
	Soroosh Shafieezadeh-Abadeh \\
	\'Ecole Polytechnique F\'ed\'erale de Lausanne\\
	CH-1015 Lausanne, Switzerland \\
	\texttt{soroosh.shafiee@epfl.ch} \\
	\And
	Peyman Mohajerin Esfahani \\
	\'Ecole Polytechnique F\'ed\'erale de Lausanne \\
	CH-1015 Lausanne, Switzerland \\
	\texttt{peyman.mohajerin@epfl.ch} \\
	\And
	Daniel Kuhn \\
	\'Ecole Polytechnique F\'ed\'erale de Lausanne \\
	CH-1015 Lausanne, Switzerland \\
	\texttt{daniel.kuhn@epfl.ch} \\
}


%

\nipsfinalcopy 

\begin{document}	
	
	\maketitle
	
	\begin{abstract} 
		This paper proposes a distributionally robust approach to logistic regression. We use the Wasserstein distance to construct a ball in the space of probability distributions centered at the uniform distribution on the training samples. If the radius of this ball is chosen judiciously, we can guarantee that it contains the unknown data-generating distribution with high confidence. We then formulate a distributionally robust logistic regression model that minimizes a worst-case expected logloss function, where the worst case is taken over all distributions in the Wasserstein ball. We prove that this optimization problem admits a tractable reformulation and encapsulates the classical as well as the popular regularized logistic regression problems as special cases. We further propose a distributionally robust approach based on Wasserstein balls to compute upper and lower confidence bounds on the misclassification probability of the resulting classifier. These bounds are given by the optimal values of two highly tractable linear programs. We validate our theoretical out-of-sample guarantees through simulated and empirical experiments. 
	\end{abstract} 
	
	\section{Introduction}
	\label{sec:Int}
	
	Logistic regression is one of the most frequently used classification methods \cite{applied}. Its objective is to establish a probabilistic relationship between a continuous feature vector and a binary explanatory variable. However, in spite of its overwhelming success in machine learning, data analytics and medicine etc., logistic regression models can display a poor out-of-sample performance if training data is sparse. In this case modelers often resort to {\em ad hoc} regularization techniques in order to combat overfitting effects. This paper aims to develop new regularization techniques for logistic regression---and to provide intuitive probabilistic interpretations for existing ones---by using tools from modern distributionally robust optimization. 
	
	\paragraph{\bf Logistic Regression:} 
	Let $x\in\mathbb{R}^n$ denote a feature vector and $y\in\{-1,+1\}$ the associated binary label to be predicted. In logistic regression, the conditional distribution of $y$ given $x$ is modeled as
	\begin{align} \label{prob:log}
	\text{Prob}(y|x)&=\left[1+\exp(-y\langle\beta,x\rangle)\right]^{-1}\,,
	\end{align}
	where the weight vector $\beta\in\mathbb{R}^n$ constitutes an unknown regression parameter. Suppose that $N$ training samples $\{(\hat{x}_i,\hat{y}_i)\}_{i=1}^N$ have been observed. Then, the maximum likelihood estimator of classical logistic regression is found by solving the geometric program
	\begin{align} \label{emprical}
	\min_{\beta} \frac{1}{N} \sum_{i=1}^{N} l_{\beta}(\hat{x}_i,\hat{y}_i)\,,
	\end{align}
	whose objective function is given by the sample average of the \emph{logloss function} $l_{\beta}(x,y)=\log(1+\exp{(-y\langle\beta,x\rangle)}).$
	It has been observed, however, that the resulting maximum likelihood estimator may display a poor out-of-sample performance. Indeed, it is well documented that minimizing the average logloss function leads to overfitting and weak classification performance \cite{feng2014robust,plan2013robust}. In order to overcome this deficiency, it has been proposed to modify the objective function of problem~\eqref{emprical} \cite{ding2013t,liu2004robit,rousseeuw2003robustness}. 
	An alternative approach is to add a regularization term to the logloss function in order to mitigate overfitting. These regularization techniques lead to a modified optimization problem
	\begin{align} \label{regularized}
	\min_{\beta} \frac{1}{N} \sum_{i=1}^{N} l_{\beta}(\hat{x}_i,\hat{y}_i) + \varepsilon R(\beta)\,,
	\end{align}
	where $R(\beta)$ and $\varepsilon$ denote the regularization function and the associated coefficient, respectively. A popular choice for the regularization term is $R(\beta)=\|\beta\|$, where $\|\cdot\|$ denotes a generic norm such as the $\ell_1$ or the $\ell_2$-norm. 
	The use of $\ell_1$-regularization tends to induce sparsity in $\beta$, which in turn helps to combat overfitting effects \cite{Tibshirani94regressionshrinkage}. Moreover, $\ell_1$-regularized logistic regression serves as an effective means for feature selection. It is further shown in \cite{ng2004feature} that $\ell_1$-regularization outperforms $\ell_2$-regularization when the number of training samples is smaller than the number of features. On the downside, $\ell_1$-regularization leads to non-smooth optimization problems, which are more challenging. Algorithms for large scale regularized logistic regression are discussed in \cite{Koh07aninterior-point,shalev2011stochastic,shi2010fast,yun2011coordinate}.
	
	\paragraph{\bf Distributionally Robust Optimization:} 
	Regression and classification problems are typically modeled as optimization problems under uncertainty. 
	To date, optimization under uncertainty has been addressed by several complementary modeling paradigms that differ mainly in the representation of uncertainty. For instance, stochastic programming assumes that the uncertainty is governed by a known probability distribution and aims to minimize a probability functional such as the expected cost or a quantile of the cost distribution \cite{shapiro2014lectures,birgelouveaux}. In contrast, robust optimization ignores all distributional information and aims to minimize the worst-case cost under all possible uncertainty realizations \cite{ben2002robust,bertsimas2004price,ben2009robust}. While stochastic programs may rely on distributional information that is not available or hard to acquire in practice, robust optimization models may adopt an overly pessimistic view of the uncertainty and thereby promote over-conservative decisions. 
	
	
	The emerging field of distributionally robust optimization aims to bridge the gap between the conservatism of robust optimization and the specificity of stochastic programming: it seeks to minimize a worst-case probability functional (e.g., the worst-case expectation), where the worst case is taken with respect to an ambiguity set, that is, a family of distributions consistent with the given prior information on the uncertainty. The vast majority of the existing literature focuses on ambiguity sets characterized through moment and support information, see e.g.\ \cite{delage2010distributionally,goh2010distributionally,wiesemann2014distributionally}. However, ambiguity sets can also be constructed via distance measures in the space of probability distributions such as the Prohorov metric \cite{erdougan2006ambiguous} or the Kullback-Leibler divergence \cite{hu2013kullback}. Due to its attractive measure concentration properties, we use here the Wasserstein metric to construct ambiguity sets. 
	\paragraph{\bf Contribution:} 
	In this paper we propose a distributionally robust perspective on logistic regression. Our research is motivated by the well-known observation that regularization techniques can improve the out-of-sample performance of many classifiers. In the context of support vector machines and Lasso, there have been several recent attempts to give  {\em ad hoc} regularization techniques a robustness interpretation \cite{xu2009robustness,xu2010robust}. However, to the best of our knowledge, no such connection has been established for logistic regression. In this paper we aim to close this gap by adopting a new distributionally robust optimization paradigm based on Wasserstein ambiguity sets \cite{MohKun-14}. Starting from a data-driven distributionally robust statistical learning setup, we will derive a family of regularized logistic regression models that admit an intuitive probabilistic interpretation and encapsulate the classical regularized logistic regression \eqref{regularized} as a special case. 
	Moreover, by invoking recent measure concentration results, our proposed approach provides a probabilistic guarantee for the emerging regularized classifiers, which seems to be the first result of this type. All proofs are relegated to the technical appendix. 
	We summarize our main contributions as follows:
	\begin{enumerate} [$\bullet$]
		\setlength{\itemsep}{0mm}
		\item \textbf{Distributionally robust logistic regression model and tractable reformulation:} 
		We propose a data-driven distributionally robust logistic regression model based on an ambiguity set induced by the Wasserstein distance. We prove that the resulting semi-infinite optimization problem admits an equivalent reformulation as a tractable convex program. 
		
		\item \textbf{Risk estimation:} Using similar distributionally robust optimization techniques based on the Wasserstein ambiguity set, we develop two highly tractable linear programs whose optimal values provide confidence bounds on the misclassification probability or \emph{risk} of the emerging classifiers. 
		
		\item \textbf{Out-of-sample performance guarantees:} Adopting a distributionally robust framework allows us to invoke results from the measure concentration literature to derive finite-sample probabilistic guarantees. Specifically, we establish \emph{out-of-sample} performance guarantees for the classifiers obtained from the proposed distributionally robust optimization model. 
		
		\item \textbf{Probabilistic interpretation of existing regularization techniques:} We show that the standard regularized logistic regression is a special case of our framework. In particular, we show that the regularization coefficient $\varepsilon$ in \eqref{regularized} can be interpreted as the size of the ambiguity set underlying our distributionally robust optimization model. 
	\end{enumerate}
	
	%
	%
	
	\section{A distributionally robust perspective on statistical learning} 
	\label{sec:prob}
	
	In the standard statistical learning setting all training and test samples are drawn independently from some distribution $\mathds{P}$ supported on $\Xi=\mathbb{R}^n \times \{-1,+1\}$. If the distribution $\mathds P$ was known, the best weight parameter $\beta$ could be found by solving the stochastic optimization problem
	\begin{align} \label{stat}
	\inf_{\beta} \big\{ \mathds{E}^\mathds{P}\left[l_{\beta}(x,y)\right]
	= \int_{\mathbb{R}^n \times \{-1,+1\}} l_{\beta}(x,y) \mathds{P}(\mathrm{d}(x,y)) \big\}.
	\end{align}
	In practice, however, $\mathds{P}$ is only indirectly observable through $N$ independent training samples. Thus, the distribution $\mathds P$ is itself uncertain, which motivates us to address problem (\ref{stat}) from a distributionally robust perspective. This means that we use the training samples to construct an ambiguity set $\mathcal{P}$, that is, a family of distributions that contains the unknown distribution $\mathds P$ with high confidence. Then we solve the distributionally robust optimization problem
	\begin{align} \label{distrob}
	\inf_{\beta}\sup_{\mathds{Q}\in\mathcal{P}}\mathds{E}^\mathds{Q}\left[l_{\beta}(x,y)\right],
	\end{align}
	which minimizes the worst-case expected logloss function. The construction of the ambiguity set $\mathcal{P}$ 
	should be guided by the following principles. {\em (i) Tractability:} It must be possible to solve the distributionally robust optimization problem (\ref{distrob}) efficiently. {\em (ii) Reliability:} The optimizer of (\ref{distrob}) should be near-optimal in \eqref{stat}, thus facilitating attractive out-of-sample guarantees. {\em (iii) Asymptotic consistency:} For large training data sets, the solution of (\ref{distrob}) should converge to the one of \eqref{stat}. 
	In this paper we propose to use the Wasserstein metric to construct $\mathcal{P}$ as a ball in the space of probability distributions that satisfies {\em (i)}--{\em (iii)}.
	
	\begin{Def} [Wasserstein Distance] \label{Wass}
		Let $M(\Xi^2)$ denote the set of probability distributions on $\Xi \times \Xi$. The Wasserstein distance between two distributions $\mathds P$ and $\mathds Q$ supported on $\Xi$ is defined as
		\begin{align*}
		W & (\mathds{Q},\mathds{P}) := \inf\limits_ {\Pi \in M(\Xi^2)}  \biggl\{
		\int_{\Xi^2} d(\xi,\xi')\, \Pi(\mathrm{d}\xi,\mathrm{d}\xi') ~:~ \Pi(\mathrm{d}\xi,\Xi) = \mathds{Q}(\mathrm{d}\xi),~ \Pi(\Xi,\mathrm{d}\xi')=\mathds{P}(\mathrm{d}\xi') 
		\biggr\},
		\end{align*}
		where $\xi=(x,y)$ and $d(\xi,\xi')$ is a metric on $\Xi$.
	\end{Def}
	The Wasserstein distance represents the minimum cost of moving the distribution $\mathds P$ to the distribution $\mathds Q$, where the cost of moving a unit mass from $\xi$ to $\xi'$ amounts to $d(\xi,\xi')$.
	
	In the remainder, we denote by $\mathds{B}_\varepsilon(\mathds{P}):=\{\mathds{Q}:W(\mathds{Q},\mathds{P}) \leq \varepsilon\}$
	the ball of radius $\varepsilon$ centered at $\mathds{P}$ with respect to the Wasserstein distance. In this paper we propose to use Wasserstein balls as ambiguity sets. Given the training data points $\{(\hat x_i, \hat y_i)\}_{i=1}^{N}$, a natural candidate for the center of the Wasserstein ball is the empirical distribution 
	$\hat{\mathds{P}}_N = \frac{1}{N} \sum_{i=1}^{N} \delta_{(\hat{x}_i,\hat{y}_i)},$
	where $ \delta_{(\hat{x}_i,\hat{y}_i)}$ denotes the Dirac point measure at $(\hat{x}_i,\hat{y}_i)$. Thus, we henceforth examine the distributionally robust optimization problem
	\begin{align} \label{distrob:wass} 
	\inf_{\beta}\sup_{\mathds{Q} \in \mathds{B}_{\varepsilon}(\hat{\mathds{P}}_N)} \mathds{E}^\mathds{Q}\left[l_{\beta}(x,y)\right]
	\end{align}
	equipped with a Wasserstein ambiguity set. Note that (\ref{distrob:wass}) reduces to the average logloss minimization problem (\ref{emprical}) associated with classical logistic regression if we set $\varepsilon=0$. 

	\section{Tractable reformulation and probabilistic guarantees} 
	\label{sec:dist}
	In this section we demonstrate that (\ref{distrob:wass}) can be reformulated as a tractable convex program and establish probabilistic guarantees for its optimal solutions.
	\subsection{Tractable reformulation} \label{sc31}
	We first define a metric on the feature-label space, which will be used in the remainder.
	\begin{Def} [Metric on the Feature-Label Space] \label{distance}
		The distance between two data points $(x,y), (x',y')\in \Xi$ is defined as
		$d\big((x,y),(x',y')\big) = \|x-x'\| + \kappa|y-y'|/2\,,$
		where $\| \cdot \|$ is any norm on $\mathbb{R}^n$, and $\kappa$ is a positive weight.
	\end{Def}
	The parameter $\kappa$ in Definition~\ref{distance} represents the relative emphasis between feature mismatch and label uncertainty. 
	The following theorem presents a tractable reformulation of the distributionally robust optimization problem~(\ref{distrob:wass}) and thus constitutes the first main result of this paper. 
	
	\begin{Thrm} [Tractable Reformulation] \label{Theorem 1}
		The optimization problem (\ref{distrob:wass}) is equivalent to 
		\begin{align} \label{tractable}
		\hat{J} := \inf_{\beta} \sup_ {\mathds{Q} \in \mathds{B} _ \varepsilon (\hat {\mathds{P}}_N )} \mathds{E}^\mathds{Q}\left[l_{\beta}(x,y)\right] 
		= \begin{cases}
		\min\limits_{\beta,\lambda,s_i}&\lambda\varepsilon+\frac{1}{N}\sum\limits_{i=1}^{N}s_i\\
		\text{s.t.}
		& l_{\beta}(\hat{x}_i,\hat{y}_i) \leq s_i \hspace{1.6cm} \forall i \leq N \\
		& l_{\beta}(\hat{x}_i,-\hat{y}_i)- \lambda \kappa \leq s_i \hspace{0.5cm} \forall i \leq N \\
		&\|\beta\|_*\leq\lambda.
		\end{cases}
		\end{align}
	\end{Thrm}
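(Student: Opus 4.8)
The plan is to fix $\beta$, reformulate the inner worst-case expectation over the Wasserstein ball as a finite convex program, and then reattach the outer minimization over $\beta$. Writing $\hat\xi_i=(\hat x_i,\hat y_i)$, the first step exploits that the center $\hat{\mathds{P}}_N$ is discrete. By Definition~\ref{Wass}, every $\mathds{Q}$ with $W(\mathds{Q},\hat{\mathds{P}}_N)\le\varepsilon$ is the first marginal of a coupling $\Pi$ whose second marginal is $\hat{\mathds{P}}_N$; since $\hat{\mathds{P}}_N$ places mass $1/N$ on each $\hat\xi_i$, the coupling disintegrates into conditional distributions $\mathds{Q}_i$, so that
\[
\sup_{\mathds{Q}\in\mathds{B}_\varepsilon(\hat{\mathds{P}}_N)}\mathds{E}^{\mathds{Q}}[l_\beta]=\sup_{\mathds{Q}_i}\Bigl\{\tfrac1N\textstyle\sum_i\int_\Xi l_\beta\,\mathrm d\mathds{Q}_i:\tfrac1N\textstyle\sum_i\int_\Xi d(\xi,\hat\xi_i)\,\mathrm d\mathds{Q}_i\le\varepsilon\Bigr\}.
\]
I would then dualize the single budget constraint with a multiplier $\lambda\ge0$ and invoke strong duality for this moment problem, as established in the Wasserstein framework of \cite{MohKun-14}, to obtain $\inf_{\lambda\ge0}\bigl\{\lambda\varepsilon+\tfrac1N\sum_i\sup_{\xi\in\Xi}[l_\beta(\xi)-\lambda\,d(\xi,\hat\xi_i)]\bigr\}$.

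The second step evaluates the inner supremum using the product structure $\Xi=\mathbb{R}^n\times\{-1,+1\}$ together with the metric of Definition~\ref{distance}. Because $y$ takes only two values, the supremum splits into the case $y=\hat y_i$ (label term $0$) and the case $y=-\hat y_i$ (label term $\kappa$), giving
\[
\sup_{\xi\in\Xi}\bigl[l_\beta(\xi)-\lambda\,d(\xi,\hat\xi_i)\bigr]=\max\Bigl\{\sup_x\bigl[l_\beta(x,\hat y_i)-\lambda\|x-\hat x_i\|\bigr],\ \sup_x\bigl[l_\beta(x,-\hat y_i)-\lambda\|x-\hat x_i\|\bigr]-\lambda\kappa\Bigr\}.
\]

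The crux is the third step: resolving the remaining continuous supremum over $x$ via the Lipschitz geometry of the logloss. A direct computation of $\nabla_x l_\beta$ shows that $l_\beta(\cdot,y)$ is Lipschitz with respect to $\|\cdot\|$ with modulus $\|\beta\|_*$, since the logistic factor lies in $(0,1)$, and this modulus is attained asymptotically along a ray aligned with the dual norm. Consequently $\sup_x[l_\beta(x,y)-\lambda\|x-\hat x_i\|]$ equals $l_\beta(\hat x_i,y)$ whenever $\lambda\ge\|\beta\|_*$ (the penalty dominates and the optimum is $x=\hat x_i$) and equals $+\infty$ whenever $\lambda<\|\beta\|_*$ (send $x$ to infinity along a near-optimal ray so that $l_\beta$ outgrows the penalty). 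Finiteness of the dual thus forces the constraint $\|\beta\|_*\le\lambda$, under which the objective collapses to $\lambda\varepsilon+\tfrac1N\sum_i\max\{l_\beta(\hat x_i,\hat y_i),\,l_\beta(\hat x_i,-\hat y_i)-\lambda\kappa\}$.

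Finally I would linearize each maximum with an epigraphical variable $s_i$, introducing the constraints $l_\beta(\hat x_i,\hat y_i)\le s_i$ and $l_\beta(\hat x_i,-\hat y_i)-\lambda\kappa\le s_i$, and reattach $\inf_\beta$; this yields precisely the program on the right-hand side of \eqref{tractable}, with convexity inherited from the convexity of $l_\beta$ and of $\|\cdot\|_*$. The main obstacle is the strong-duality step, namely justifying the absence of a duality gap for the infinite-dimensional transport problem and the measurability and attainment of the worst-case coupling; this is where I would rely on the general Wasserstein duality result of \cite{MohKun-14} rather than re-deriving it from scratch.
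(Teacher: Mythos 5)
Your proposal is correct, and its overall architecture coincides with the paper's: disintegrate the coupling over the discrete center $\hat{\mathds{P}}_N$ into conditionals $\mathds{Q}^i$, dualize the single Wasserstein budget constraint with a multiplier $\lambda\ge 0$ (strong duality for $\varepsilon>0$, which the paper takes from Shapiro's conic duality and you take from the general Wasserstein result of \cite{MohKun-14}), split the inner supremum over the two label values to produce the $-\lambda\kappa$ offset, and finish with an epigraphical reformulation. The one place where you genuinely diverge is the technical heart, namely the evaluation of $\sup_{x}\,l_\beta(x,y)-\lambda\|x-\hat x_i\|$. The paper's Lemma~1 handles this by convex conjugacy: it writes the logloss as its biconjugate, i.e.\ an upper envelope of affine functions $\langle\theta\beta,\xi\rangle-f^*(\theta)$ with $\theta\in[0,1]$, swaps $\sup$ and $\inf$ via a minimax theorem, and reads off the dichotomy ($h_\beta(\hat\xi)$ if $\|\beta\|_*\le\lambda$, $+\infty$ otherwise) from the feasibility of $\theta\beta+q=0$ with $\|q\|_*\le\lambda$. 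You instead argue directly from the Lipschitz geometry: since $\nabla_x l_\beta = f'(y\langle\beta,x\rangle)\,y\beta$ with $f'\in(-1,0)$, the Lipschitz modulus of $l_\beta(\cdot,y)$ in $\|\cdot\|$ is exactly $\|\beta\|_*$, so the penalty pins the optimum at $\hat x_i$ when $\lambda\ge\|\beta\|_*$, while for $\lambda<\|\beta\|_*$ the lower bound $l_\beta(x,y)\ge -y\langle\beta,x\rangle$ along a ray $x=\hat x_i-tyv$ with $\langle\beta,v\rangle>\lambda$, $\|v\|=1$, drives the objective to $+\infty$. Both arguments are valid; yours is more elementary and makes the ``Lipschitz regularization'' interpretation of the dual-norm constraint transparent, whereas the paper's conjugacy route is the one that generalizes to the indicator losses in Theorem~3, where no finite Lipschitz constant exists. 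The only point you leave to the literature, the absence of a duality gap for the infinite-dimensional moment problem, is exactly the point the paper also outsources, so nothing essential is missing.
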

	Note that \eqref{tractable} constitutes a tractable convex program for most commonly used norms $\|\cdot\|$.
	
	
	\begin{Rmk} [Regularized Logistic Regression] \label{Remark 1}
		As the parameter $\kappa>0$ characterizing the metric $d(\cdot,\cdot)$ tends to infinity, the second constraint group in the convex program (\ref{tractable}) becomes redundant. Hence, (\ref{tractable}) reduces to the celebrated regularized logistic regression problem
		\begin{align*}
		\inf\limits_{\beta}~ \varepsilon\|\beta\|_*+\frac{1}{N}\sum\limits_{i=1}^{N}l_{\beta}(\hat{x}_i,\hat{y}_i),
		\end{align*}
		where the regularization function is determined by the dual norm on the feature space, while the regularization coefficient coincides with the radius of the Wasserstein ball. Note that for $\kappa=\infty$ the Wasserstein distance between two distributions is infinite if they assign different labels to a fixed feature vector with positive probability. Any distribution in $ \mathds{B}_{\varepsilon}(\hat{\mathds{P}}_N)$ must then have non-overlapping conditional supports for $y=+1$ and $y=-1$. Thus, setting $\kappa=\infty$ reflects the belief that the label is a (deterministic) function of the feature and that label measurements are exact. As this belief is not tenable in most applications, an approach with $\kappa<\infty$ may be more satisfying.
	\end{Rmk}
	
	\subsection{Out-of-sample performance guarantees}
	We now exploit a recent measure concentration result characterizing the speed at which $\hat{\mathds P}_N$ converges to $\mathds P$ with respect to the Wasserstein distance \cite{fournier2013rate} in order to derive out-of-sample performance guarantees for distributionally robust logistic regression.
	
	In the following, we let $\hat{\Xi}_N:=\{(\hat{x}_i,\hat{y}_i)\}_{i=1}^N$ be a set of $N$ independent training samples from $\mathds{P}$, and we denote by $\hat{\beta}, \hat{\lambda},$ and $\hat{s}_i$ the optimal solutions and $\hat{J}$ the corresponding optimal value of \eqref{tractable}. Note that these values are random objects as they depend on the random training data $\hat{\Xi}_N$.
	
	\begin{Thrm} [Out-of-Sample Performance] \label{Theorem 2}
		Assume that the distribution $\mathds{P}$ is light-tailed, i.e.\,, there is $a > 1$ with $A:=\mathds{E}^\mathds{P}[\exp(\| 2x \|^a)]  < + \infty$. If the radius $\varepsilon$ of the Wasserstein ball is set to
		\begin{align} \label{eps:opt}
		\varepsilon_N(\eta) = \left( \frac{\log {(c_1 \eta^{-1})}}{c_2 N} \right)^{\frac{1}{a}} \mathds{1}_{\{N < \frac {\log {(c_1 \eta^{-1})}} {c_2 c_3} \}}  + \left( \frac{\log {(c_1 \eta^{-1})}}{c_2 N} \right)^{\frac{1}{n}} \mathds{1}_{\{N \geq \frac {\log {(c_1 \eta^{-1})}} {c_2 c_3} \}},
		\end{align}
		then we have $\mathbb P^N \big\lbrace \mathds{P} \in \mathds{B}_\varepsilon(\hat{\mathds{P}}_N) \big\rbrace \geq 1-\eta$, implying that
		$\mathbb P^N \{\hat{\Xi}_N: \mathds{E}^\mathds{P}[l_{\hat{\beta}}(x,y)] \leq \hat{J} \} \geq 1 - \eta$
		for all sample sizes $N\ge 1$ and confidence levels $\eta \in (0,1]$. Moreover, the positive constants $c_1, c_2,$ and $c_3$ appearing in~\eqref{eps:opt} depend only on the light-tail parameters $a$ and $A$, the dimension $n$ of the feature space, and the metric on the feature-label space.
	\end{Thrm}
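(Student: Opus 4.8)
The plan is to obtain the finite-sample guarantee by chaining two facts: a \emph{measure-concentration} statement asserting that the true distribution $\mathds{P}$ lies in the Wasserstein ball $\mathds{B}_\varepsilon(\hat{\mathds{P}}_N)$ with probability at least $1-\eta$ once the radius is set to $\varepsilon_N(\eta)$, and a \emph{deterministic domination} statement asserting that on every sample for which $\mathds{P}\in\mathds{B}_\varepsilon(\hat{\mathds{P}}_N)$ the true expected logloss of the fitted classifier is bounded above by the optimal value $\hat J$. The second claim of the theorem then follows from the first by a set inclusion, so the substance of the argument lies entirely in the concentration statement while the domination step is immediate.

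I would first dispatch the domination step. Fix a realization of the training data for which $\mathds{P}\in\mathds{B}_\varepsilon(\hat{\mathds{P}}_N)$. Since $\mathds{P}$ is then a feasible distribution for the inner maximization and $\hat\beta$ is an optimizer of the problem reformulated in Theorem~\ref{Theorem 1}, we obtain
\[
\mathds{E}^\mathds{P}[l_{\hat\beta}(x,y)] \;\le\; \sup_{\mathds{Q}\in\mathds{B}_\varepsilon(\hat{\mathds{P}}_N)} \mathds{E}^\mathds{Q}[l_{\hat\beta}(x,y)] \;=\; \hat J .
\]
Consequently $\{\hat\Xi_N : \mathds{P}\in\mathds{B}_\varepsilon(\hat{\mathds{P}}_N)\} \subseteq \{\hat\Xi_N : \mathds{E}^\mathds{P}[l_{\hat\beta}(x,y)]\le\hat J\}$, and monotonicity of $\mathbb{P}^N$ transfers the $1-\eta$ lower bound from the former event to the latter.

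For the concentration step I would invoke the rate-of-convergence estimate of Fournier and Guillin \cite{fournier2013rate}. Under the light-tail hypothesis this result supplies constants $c_1,c_2,c_3>0$, depending only on $a$, $A$, the dimension $n$, and the metric $d$, such that $\mathbb{P}^N\{W(\hat{\mathds{P}}_N,\mathds{P})\ge\varepsilon\}$ is bounded by $c_1\exp(-c_2 N\varepsilon^{a})$ in the large-radius regime and by $c_1\exp(-c_2 N\varepsilon^{n})$ in the small-radius regime, with the crossover governed by $c_3$. Before applying it I would check that the stated assumption $A=\mathds{E}^\mathds{P}[\exp(\|2x\|^a)]<\infty$ implies the exponential-moment condition required on the product space $\Xi$ equipped with $d$: because the label coordinate contributes at most $\kappa$ to $d$ and $\|x-x'\|\le\|x\|+\|x'\|$, the factor $2$ in $\|2x\|$ is precisely what is needed to absorb the doubling induced by the triangle inequality, so that the relevant exponential moment of $d$ is dominated by a constant multiple of $A$. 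Inverting the two bounds---equating each to $\eta$ and solving for $\varepsilon$---yields the two branches $(\log(c_1\eta^{-1})/(c_2 N))^{1/a}$ and $(\log(c_1\eta^{-1})/(c_2 N))^{1/n}$, while determining which regime the solution falls into produces exactly the threshold on $N$ relative to $\log(c_1\eta^{-1})/(c_2 c_3)$ encoded by the indicators in \eqref{eps:opt}. Finally, symmetry of the Wasserstein distance gives $\{W(\hat{\mathds{P}}_N,\mathds{P})\le\varepsilon\}=\{\mathds{P}\in\mathds{B}_\varepsilon(\hat{\mathds{P}}_N)\}$, so the complementary bound reads $\mathbb{P}^N\{\mathds{P}\in\mathds{B}_\varepsilon(\hat{\mathds{P}}_N)\}\ge 1-\eta$, as asserted.

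I expect the main obstacle to be the bookkeeping in the concentration step rather than any deep difficulty: one must state the Fournier--Guillin inequality in a form whose constants provably depend only on $(a,A,n,d)$, confirm that the light-tail assumption as phrased genuinely verifies its hypotheses on the metric space $(\Xi,d)$, and carry out the inversion so that the crossover threshold matches the indicator in \eqref{eps:opt} exactly for all $N\ge 1$ and $\eta\in(0,1]$. The domination step, by contrast, is a one-line consequence of feasibility of $\mathds{P}$ in the inner supremum together with optimality of $\hat\beta$ in the reformulation of Theorem~\ref{Theorem 1}.
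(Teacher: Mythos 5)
Your proposal is correct and follows essentially the same route as the paper: verify that the light-tail assumption on $x$ yields a finite exponential moment of $d(\cdot,(0,+1))$ on $\Xi$, invoke the Fournier--Guillin concentration result to obtain $\mathbb P^N\{\mathds{P}\in\mathds{B}_{\varepsilon_N(\eta)}(\hat{\mathds{P}}_N)\}\ge 1-\eta$, and conclude by feasibility of $\mathds{P}$ in the inner supremum (a step the paper leaves implicit but you spell out). One minor point: the factor $2$ in $\|2x\|$ is there to absorb the constant $2^{a-1}$ from the convexity bound $(\|x\|+\kappa)^a\le 2^{a-1}(\|x\|^a+\kappa^a)$ applied to $d((x,y),(0,+1))\le\|x\|+\kappa$, not a doubling from the triangle inequality, but this does not change the argument.
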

	
	\begin{Rmk} [Worst-Case Loss] \label{Remark 2}
		Denoting the empirical logloss function on the training set $\hat{\Xi}_N$ by $\mathds{E}^{\hat{\mathds{P}}^N}[l_{\hat{\beta}}(x,y)]$, the worst-case loss $\hat{J}$ can be expressed as
		\begin{align} \label{int:j}
		\hat{J} = \hat{\lambda} \varepsilon + \mathds{E}^{\hat{\mathds{P}}^N}[l_{\hat{\beta}}(x,y)] + \frac{1}{N}
		\sum\limits_{i=1}^N	\max \lbrace 0, \hat{y}_i \langle \hat{\beta}, \hat{x}_i \rangle - \hat{\lambda} \kappa \rbrace.
		\end{align}
		Note that the last term in \eqref{int:j} can be viewed as a complementary regularization term that does not appear in standard regularized logistic regression. This term accounts for label uncertainty and decreases with $\kappa$. Thus, $\kappa$ can be interpreted as our trust in the labels of the training samples. Note that this regularization term vanishes for $\kappa\rightarrow\infty$.  One can further prove that $\hat{\lambda}$ converges to $\| \hat{\beta} \|_*$ for $\kappa\rightarrow\infty$, implying that \eqref{int:j} reduces to the standard regularized logistic regression in this limit.
	\end{Rmk}	
	\begin{Rmk} [Performance Guarantees]
		The following comments are in order:
		\begin{enumerate} [I.]
			\setlength{\itemsep}{0mm}
			\item {\textbf{Light-Tail Assumption: }} The light-tail assumption of Theorem~\ref{Theorem 2} is restrictive but seems to be unavoidable for any a priori guarantees of the type described in Theorem~\ref{Theorem 2}. Note that this assumption is automatically satisfied if the features have bounded support or if they are known to follow, for instance, a Gaussian or exponential distribution. 
			
			\item {\textbf{Asymptotic Consistency: }} For any fixed confidence level $\eta$, the radius $\varepsilon_N(\eta)$ defined in \eqref{eps:opt} drops to zero as the sample size $N$ increases, and thus the ambiguity set shrinks to a singleton. To be more precise, with probability 1 across  all training datasets, a sequence of distributions in the ambiguity set (8) converges in the Wasserstein metric, and thus weakly, to the unknown data generating distribution $\mathds{P}$; see \cite[Corollary~3.4]{MohKun-14} for a formal proof. Consequently, the solution of (\ref{emprical}) can be shown to converge to the solution of (\ref{stat}) as $N$ increases.			
			
			\item {\textbf{Finite Sample Behavior: }} The a priori bound (\ref{eps:opt}) on the size of the Wasserstein ball has two growth regimes. For small $N$, the radius decreases as $N^\frac{1}{a}$, and for large $N$ it scales with $N^\frac{1}{n}$, where $n$ is the dimension of the feature space. We refer to \cite[Section 1.3]{fournier2013rate} for further details on the optimality of these rates and potential improvements for special cases. Note that when the support of the underlying distribution $\mathds{P}$ is bounded or $\mathds{P}$ has a Gaussian distribution, the parameter $a$ can be effectively set to 1. 
		\end{enumerate}
	\end{Rmk}
	
	\subsection{Risk Estimation: Worst- and Best-Cases}
	One of the main objectives in logistic regression is to control the classification performance. Specifically, we are interested in \emph{predicting} labels from features. This can be achieved via a classifier function $f_\beta:\mathbb{R}^n \rightarrow \{ +1, -1 \},$ whose \emph{risk} $\risk(\beta) := \mathbb P\big[y \neq f_\beta(x)\big]$ represents the misclassification probability.
	In logistic regression, a natural choice for the classifier is 
	$f_\beta(x)=+1 \text{ if Prob}(+1|x)>0.5;=-1 \text{ otherwise.}$
	The conditional probability $\text{Prob}(y|x)$ is defined in \eqref{prob:log}. The risk associated with this classifier can be expressed as $\risk(\beta) = \mathds{E}^\mathds{P}\big[\mathds{1}_{\lbrace y \langle \beta , x \rangle \leq 0 \rbrace}\big].$
	As in Section~\ref{sc31}, we can use worst- and best-case expectations over Wasserstein balls to construct confidence bounds on the risk.
	\begin{Thrm} [Risk Estimation] \label{Theorem 3}
		For any $\hat{\beta}$ depending on the training dataset $\{(\hat x_i, \hat y_i)\}_{i=1}^{N}$ we have:
		\begin{subequations}
			\label{worst-best-case}
			\begin{enumerate} [(i)]
				\setlength{\itemsep}{0mm}
				\item The worst-case risk $\risk_{\max}(\hat{\beta}) := \sup_ {\mathds{Q} \in \mathds{B}_\varepsilon(\hat{\mathds{P}}_N)} \mathds{E}^\mathds{Q}  [\mathds{1}_{\lbrace y \langle \hat{\beta} , x \rangle \leq 0 \rbrace}]$ is given by
				\begin{align} \label{worst_case}
				\risk_{\max}(\hat{\beta}) = \begin{cases}
				\min\limits_{\lambda,s_i,r_i,t_i} ~ & \lambda \varepsilon + \frac{1}{N} \sum\limits_{i=1}^N s_i \\
				\quad \text{s.t.} 
				& 1 - r_i \hat{y_i} \langle \hat{\beta} , \hat{x}_i \rangle  \leq s_i \hspace{1.3cm} \quad \forall i \leq N\\
				& 1 + t_i \hat{y_i} \langle \hat{\beta} , \hat{x}_i \rangle - \lambda	\kappa \leq s_i \quad \hspace{0.5cm} \forall i \leq N\\
				& r_i \|\hat{\beta}\|_* \leq \lambda , \quad t_i \|\hat{\beta}\|_* \leq \lambda \hspace{0.7cm} \forall i \leq N \\
				& r_i, t_i, s_i \geq 0 \quad \hspace{2.42cm} \forall i \leq N.
				\end{cases}
				\end{align}
				If the Wasserstein radius $\varepsilon$ is set to $\varepsilon_N(\eta)$ as defined in \eqref{eps:opt}, then $\risk_{\max}(\hat{\beta}) \geq \risk(\hat{\beta})$ with probability $1 - \eta$ across all training sets $\{(x_i,y_i)\}_{i=1}^N$.
				\item Similarly, the best-case risk $\risk_{\min}(\hat{\beta}) := \inf _ {\mathds{Q} \in \mathds{B}_\varepsilon(\hat{\mathds{P}}_N)} \mathds{E}^\mathds{Q}  [\mathds{1}_{\lbrace y \langle \hat{\beta} , x \rangle < 0 \rbrace}]$ is given by
				\begin{align} \label{best_case}
				\risk_{\min}(\hat{\beta}) = 1-\begin{cases}
				\min\limits_{\lambda,s_i,r_i,t_i} ~ & \lambda \varepsilon + \frac{1}{N} \sum\limits_{i=1}^N s_i \\
				\quad \text{s.t.} 
				& 1 + r_i \hat{y_i} \langle \hat{\beta} , \hat{x}_i \rangle  \leq s_i \quad \hspace{1.3cm} \forall i \leq N\\
				& 1 - t_i \hat{y_i} \langle \hat{\beta} , \hat{x}_i \rangle - \lambda	\kappa \leq s_i \quad \hspace{0.5cm} \forall i \leq N\\
				& r_i \|\hat{\beta}\|_* \leq \lambda, \quad t_i \|\hat{\beta}\|_* \leq \lambda  \hspace{0.7cm} \forall i \leq N \\
				& r_i, t_i, s_i \geq 0 \quad \hspace{2.42cm} \forall i \leq N.
				\end{cases}
				\end{align}
				If the Wasserstein radius $\varepsilon$ is set to $\varepsilon_N(\eta)$ as defined in \eqref{eps:opt}, then $\risk_{\min}(\hat{\beta}) \leq \risk(\hat{\beta})$ with probability $1- \eta$ across all training sets $\{(x_i,y_i)\}_{i=1}^N$. 
			\end{enumerate}
		\end{subequations}		
	\end{Thrm}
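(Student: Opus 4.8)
The backbone of the argument is the strong-duality reformulation for worst-case expectations over a Wasserstein ball established in \cite{MohKun-14}. The plan is to apply it to the bounded, upper semicontinuous loss $L(x,y)=\mathds{1}_{\{y\langle\hat{\beta},x\rangle\leq 0\}}$, which is upper semicontinuous precisely because the set $\{y\langle\hat{\beta},x\rangle\leq 0\}$ is closed. This yields
\[
\risk_{\max}(\hat{\beta})=\inf_{\lambda\geq 0}\Big\{\lambda\varepsilon+\tfrac{1}{N}\sum_{i=1}^N s_i(\lambda)\Big\},\qquad s_i(\lambda)=\sup_{(x,y)}\big[L(x,y)-\lambda\, d\big((x,y),(\hat{x}_i,\hat{y}_i)\big)\big].
\]
Because the loss takes values in $[0,1]$, the growth condition required for duality is trivially met, so only the evaluation of the inner suprema $s_i(\lambda)$ remains, and this is where the real work lies rather than in the duality itself.

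To compute $s_i(\lambda)$ I would exploit that $y$ ranges only over $\hat{y}_i$ and $-\hat{y}_i$ and treat the two branches separately. By Definition~\ref{distance} the label-preserving branch incurs no penalty while the label-flipping branch incurs the constant $-\lambda\kappa$. In each branch the remaining supremum over $x$ is that of a piecewise-constant indicator minus $\lambda\|x-\hat{x}_i\|$, hence equals $\max\{0,\,1-\lambda\,\mathrm{dist}(\hat{x}_i,H)\}$, where $H$ is the relevant closed halfspace on which the indicator equals one: one compares the option of paying to reach $H$ against the value $0$ available for free outside it. The crucial identity is that the $\|\cdot\|$-distance from $\hat{x}_i$ to a halfspace $\{x:\langle a,x\rangle\leq 0\}$ equals $\max\{0,\langle a,\hat{x}_i\rangle\}/\|a\|_*$, which is where the dual norm enters. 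Writing $u_i:=\max\{0,\hat{y}_i\langle\hat{\beta},\hat{x}_i\rangle\}$ and $v_i:=\max\{0,-\hat{y}_i\langle\hat{\beta},\hat{x}_i\rangle\}$ for the positive and negative parts of the margin, this produces $s_i(\lambda)=\max\{0,\,1-\lambda u_i/\|\hat{\beta}\|_*,\,1-\lambda v_i/\|\hat{\beta}\|_*-\lambda\kappa\}$, the subtracted $\lambda\kappa\geq 0$ absorbing the inner nonnegativity of the flipped branch.

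It then remains to linearize. I would pass to the epigraph by introducing $s_i$ together with auxiliary variables $r_i,t_i\geq 0$ that encode the rescaled dual-norm factor through $r_i\|\hat{\beta}\|_*\leq\lambda$ and $t_i\|\hat{\beta}\|_*\leq\lambda$. Minimizing $\sum_i s_i$ drives each $r_i$ (resp.\ $t_i$) to its optimal value, and a direct check shows that the resulting right-hand sides reproduce $1-\lambda u_i/\|\hat{\beta}\|_*$ and $1-\lambda v_i/\|\hat{\beta}\|_*-\lambda\kappa$, so the three-way maximum above is recovered exactly; this is precisely the linear program~\eqref{worst_case}.

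For the best case I would write $\risk_{\min}(\hat{\beta})=1-\sup_{\mathds{Q}\in\mathds{B}_\varepsilon(\hat{\mathds{P}}_N)}\mathds{E}^{\mathds{Q}}[\mathds{1}_{\{y\langle\hat{\beta},x\rangle\geq 0\}}]$. The strict inequality in the definition of $\risk_{\min}$ is essential here: it renders the complementary set $\{y\langle\hat{\beta},x\rangle\geq 0\}$ closed, so its indicator is again upper semicontinuous and the identical argument applies, now with the roles of $u_i$ and $v_i$ interchanged, delivering~\eqref{best_case}. Finally, the probabilistic statements follow immediately from Theorem~\ref{Theorem 2}: when $\varepsilon=\varepsilon_N(\eta)$ the event $\mathds{P}\in\mathds{B}_\varepsilon(\hat{\mathds{P}}_N)$ has probability at least $1-\eta$, and on this event monotonicity of the supremum and infimum over the ball gives $\risk_{\min}(\hat{\beta})\leq\risk(\hat{\beta})\leq\risk_{\max}(\hat{\beta})$, where the inclusion $\{y\langle\hat{\beta},x\rangle<0\}\subseteq\{y\langle\hat{\beta},x\rangle\leq 0\}$ reconciles the strict and non-strict indicators. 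The main obstacle throughout is the evaluation of $s_i(\lambda)$ for the discontinuous indicator loss, namely the halfspace-distance case analysis and the correct bookkeeping of the $\lambda\kappa$ term.
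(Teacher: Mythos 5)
Your argument is correct and reaches the same linear programs, but it executes the key step differently from the paper. Both proofs start identically: apply the Wasserstein strong-duality reformulation (the analogue of \eqref{important}) to the indicator loss, exploiting that the label component only takes the two values $\pm\hat y_i$ so that the inner supremum splits into a label-preserving branch and a label-flipping branch carrying the extra $-\lambda\kappa$. Where you diverge is in evaluating $\sup_{x}\,\mathds{1}_{\{x\in H\}}-\lambda\|x-\hat x_i\|$. The paper writes the indicator as $\max\{h_1(x),0\}$ with $h_1$ an extended-real-valued concave function, splits each semi-infinite constraint in two, introduces dual variables $p_i,q_i$ via the dual-norm representation of $-\lambda\|\hat x_i-x\|$ and a minimax swap, and then eliminates $p_i,q_i$ through the LP duals \eqref{th3_3}--\eqref{th3_4}, which is how $r_i,t_i$ arise as genuine dual multipliers. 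You instead compute the supremum in closed form via the identity $\mathrm{dist}(\hat x_i,\{x:\langle a,x\rangle\leq 0\})=\max\{0,\langle a,\hat x_i\rangle\}/\|a\|_*$, obtain $s_i(\lambda)=\max\{0,\,1-\lambda u_i/\|\hat\beta\|_*,\,1-\lambda v_i/\|\hat\beta\|_*-\lambda\kappa\}$, and only then introduce $r_i,t_i$ as epigraph variables, verifying that minimization drives them to reproduce this three-way maximum. Your route is more elementary and makes the geometry (and the role of the dual norm) explicit, at the cost of a case analysis and an a posteriori check that the LP matches the closed form; the paper's route is more mechanical but generalizes more readily to losses that are maxima of several concave pieces. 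Two details you handle correctly that deserve emphasis: the upper semicontinuity of $\mathds{1}_{\{y\langle\hat\beta,x\rangle\leq 0\}}$ (needed for the sup-duality to hold with equality), and the observation that the strict inequality in the definition of $\risk_{\min}$ is exactly what makes the complementary set $\{y\langle\hat\beta,x\rangle\geq 0\}$ closed so that the same machinery applies after writing $\risk_{\min}(\hat\beta)=1-\sup_{\mathds{Q}\in\mathds{B}_\varepsilon(\hat{\mathds{P}}_N)}\mathds{E}^{\mathds{Q}}[\mathds{1}_{\{y\langle\hat\beta,x\rangle\geq 0\}}]$; the paper performs the same rewriting. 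The probabilistic statements follow from Theorem~\ref{Theorem 2} in both treatments.
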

	We emphasize that \eqref{worst_case} and \eqref{best_case} constitute highly tractable linear programs. Moreover, we have $\risk_{\min}(\hat{\beta}) \leq \risk(\hat{\beta}) \leq \risk_{\max}(\hat{\beta})$ with probability $1 - 2\eta$.
	\section{Numerical Results} 
	\label{sec:numer}
	We now showcase the power of distributionally robust logistic regression in simulated and empirical experiments. All optimization problems are implemented in MATLAB via the modeling language YALMIP \cite{yalmip} and solved with the state-of-the-art nonlinear programming solver IPOPT \cite{wachter2006implementation}. All experiments were run on an Intel XEON CPU (3.40GHz). For the largest instance studied ($N=1000$), the problems \eqref{emprical}, \eqref{regularized}, \eqref{tractable} and \eqref{worst-best-case} were solved in 2.1, 4.2, 9.2 and 0.05 seconds, respectively.
	
	\subsection{Experiment 1: Out-of-Sample Performance} \label{Exp1}
	
	We use a simulation experiment to study the out-of-sample performance guarantees offered by distributionally robust logistic regression. As in \cite{ng2004feature}, we assume that the features $x\in\mathbb R^{10}$ follow a multivariate standard normal distribution and that the conditional distribution of the labels $y\in \{+1,-1\}$ is of the form~\eqref{prob:log} with $\beta=(10,0,\ldots,0)$. The true distribution $\mathds P$ is uniquely determined by this information. If we use the $\ell_\infty$-norm to measure distances in the feature space, then $\mathds P$ satisfies the light-tail assumption of Theorem~\ref{Theorem 2} for $2>a\gtrsim 1$. Finally, we set $\kappa=1$. 
	
	Our experiment comprises 100 simulation runs. In each run we generate $N \in\{ 10, 10^2,10^3\}$ training samples and $10^4$ test samples from $\mathds P$. We calibrate the distributionally robust logistic regression model \eqref{distrob:wass} to the training data and use the test data to evaluate the average logloss as well as the correct classification rate (CCR) of the classifier associated with $\hat{\beta}$. We then record the percentage $\hat\eta_N(\varepsilon)$ of simulation runs in which the average logloss exceeds $\hat{J}$. Moreover, we calculate the average CCR across all simulation runs. Figure~\ref{3figs} displays both $1-\hat\eta_N(\varepsilon)$ and the average CCR as a function of $\varepsilon$ for different values of~$N$. 
	\begin{figure*} [t]
		\centering
		\subfigure[$N=10$ training samples]{\label{3figs-a} \includegraphics[width=0.31\columnwidth]{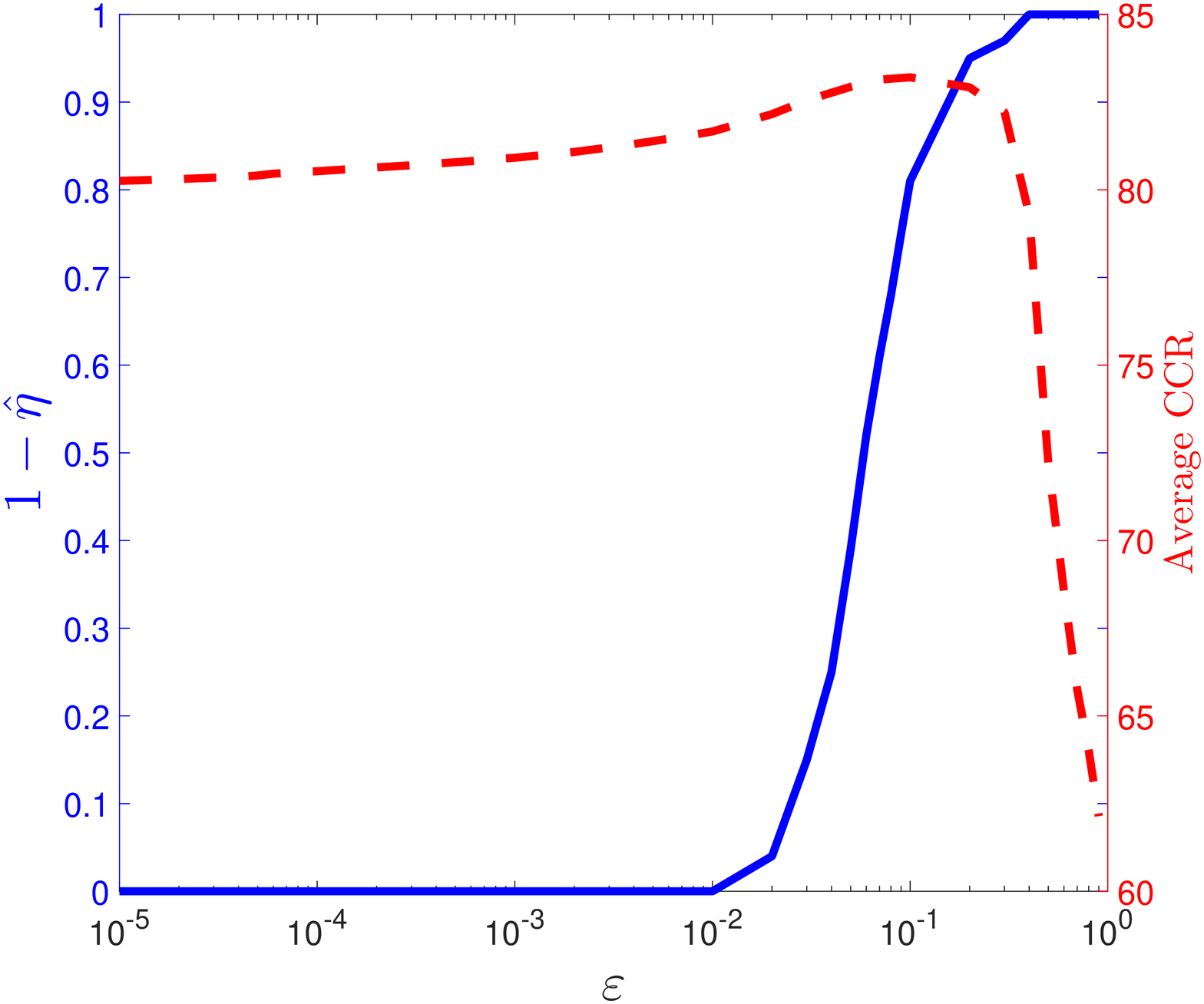}} \hspace{0pt}
		\subfigure[$N=100$ training samples]{\label{3figs-b} \includegraphics[width=0.31\columnwidth]{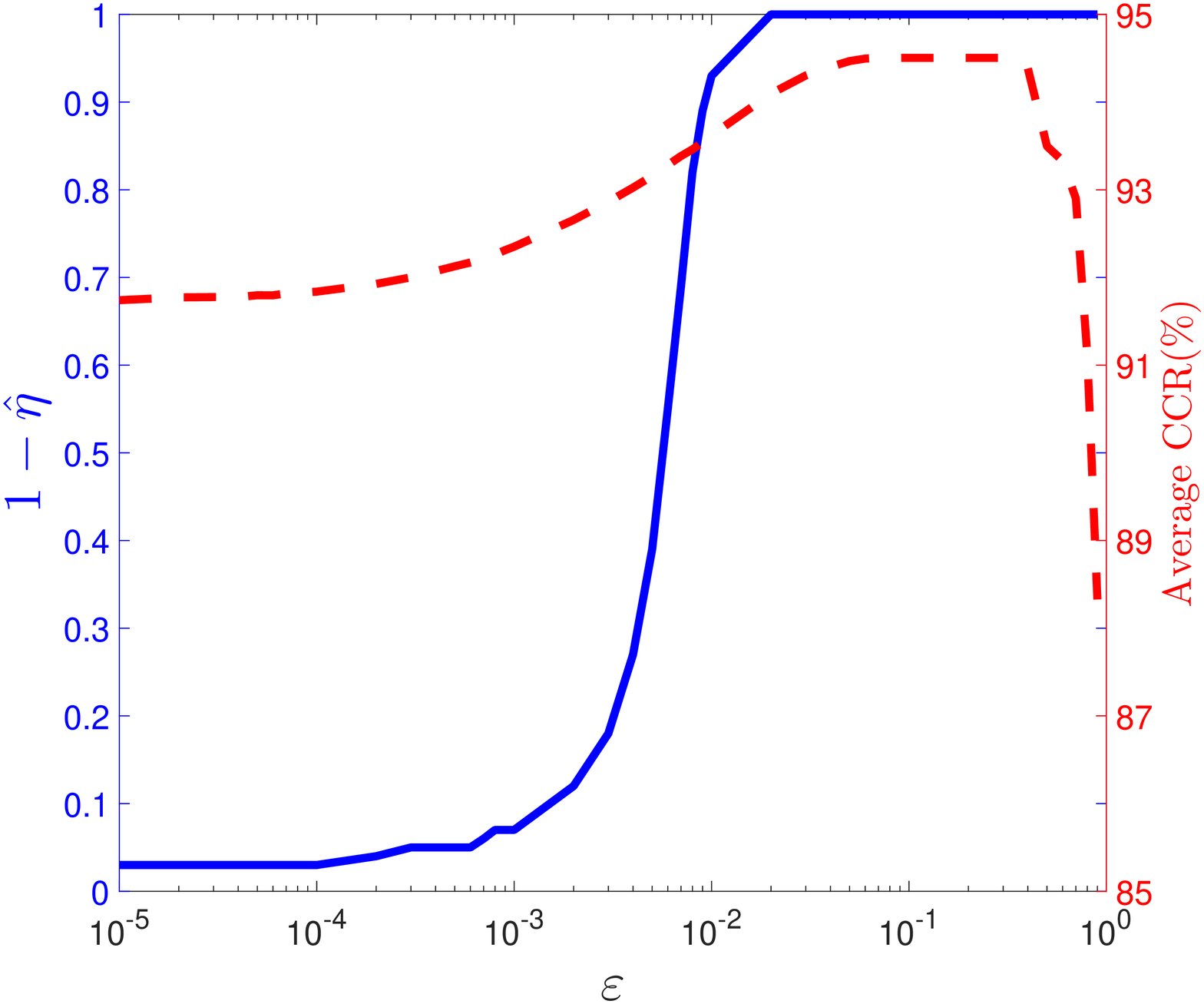}} \hspace{0pt}
		\subfigure[$N=1000$ training samples]{\label{3figs-c} \includegraphics[width=0.31\columnwidth]{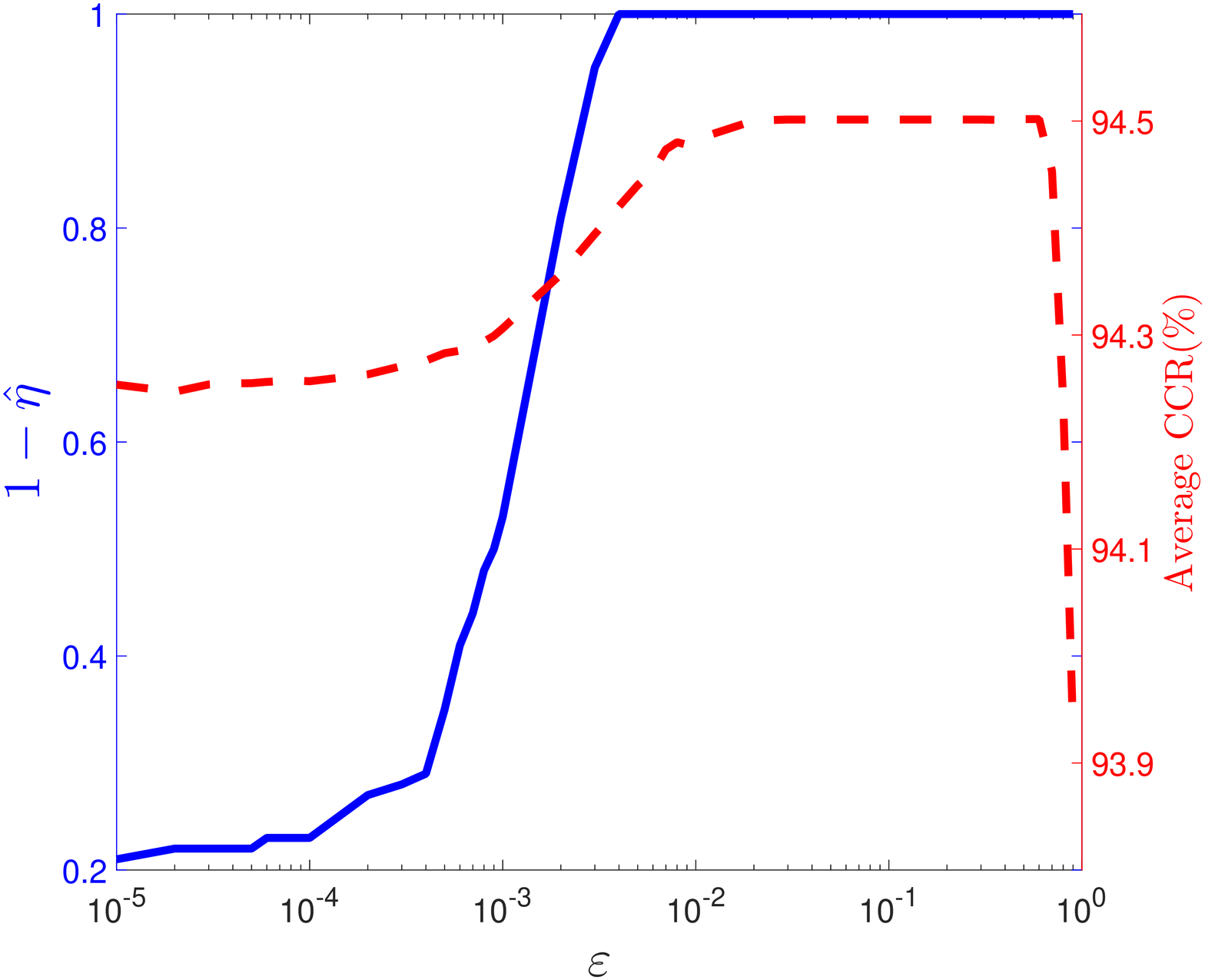}}
		\caption{Out-of-sample performance (solid blue line) and the average CCR (dashed red line)}
		\label{3figs}
	\end{figure*}
	Note that $1-\hat\eta_N(\varepsilon)$ quantifies the probability (with respect to the training data) that $\mathds P$ belongs to the Wasserstein ball of radius $\varepsilon$ around the empirical distribution $\hat{\mathds P}_N$. Thus, $1-\hat\eta_N(\varepsilon)$ increases with $\varepsilon$. The average CCR benefits from the regularization induced by the distributional robustness and increases with $\varepsilon$ as long as the empirical confidence $1-\hat \eta_N(\varepsilon)$ is smaller than 1. As soon as the Wasserstein ball is large enough to contain the distribution $\mathds P$ with high confidence ($1-\hat\eta_N(\varepsilon)\lesssim 1$), however, any further increase of $\varepsilon$ is detrimental to the average~CCR.  
	
	
	Figure~\ref{3figs} also indicates that the radius $\varepsilon$ implied by a fixed empirical confidence level scales inversely with the number of training samples $N$. Specifically, for $N =10, 10^2,10^3$, the Wasserstein radius implied by the confidence level $1-\hat \eta = 95\%$ is given by $\varepsilon \approx 0.2, 0.02, 0.003$, respectively. This observation is consistent with the a priori estimate \eqref{eps:opt} of the Wasserstein radius $\varepsilon_N(\eta)$ associated with a given $\eta$. Indeed, as $a\gtrsim 1,$ Theorem~\ref{Theorem 2} implies that $\varepsilon_N(\eta)$ scales with $N^{\frac{1}{a}}\lesssim N$ for $\varepsilon \geq c_3$. 
	
	%
	
	\subsection{Experiment 2: The Effect of the Wasserstein Ball}
	
	In the second simulation experiment we study the statistical properties of the out-of-sample logloss. As in \cite{feng2014robust}, we set $n=10$ and assume that the features follow a multivariate standard normal distribution, while the conditional distribution of the labels is of the form~\eqref{prob:log} with $\beta$ sampled uniformly from the unit sphere. We use the $\ell_2$-norm in the feature space, and we set $\kappa=1$. All results reported here are averaged over 100 simulation runs. In each trial, we use $N=10^2$ training samples to calibrate problem \eqref{distrob:wass} and $10^4$ test samples to estimate the logloss distribution of the resulting classifier. 
	
	Figure~\ref{2figs-a} visualizes the conditional value-at-risk (CVaR) of the out-of-sample logloss distribution for various confidence levels and for different values of $\varepsilon$. The CVaR of the logloss at level $\alpha$ is defined as the conditional expectation of the logloss above its $(1-\alpha)$-quantile, see~\cite{rockafellar2000optimization}. In other words, the CVaR at level $\alpha$ quantifies the average of the $\alpha \times 100\%$ worst logloss realizations.
	\begin{figure*} [b]
		\centering
		\subfigure[CVaR versus quantile of the logloss function]{\label{2figs-a} \includegraphics[width=0.3\columnwidth]{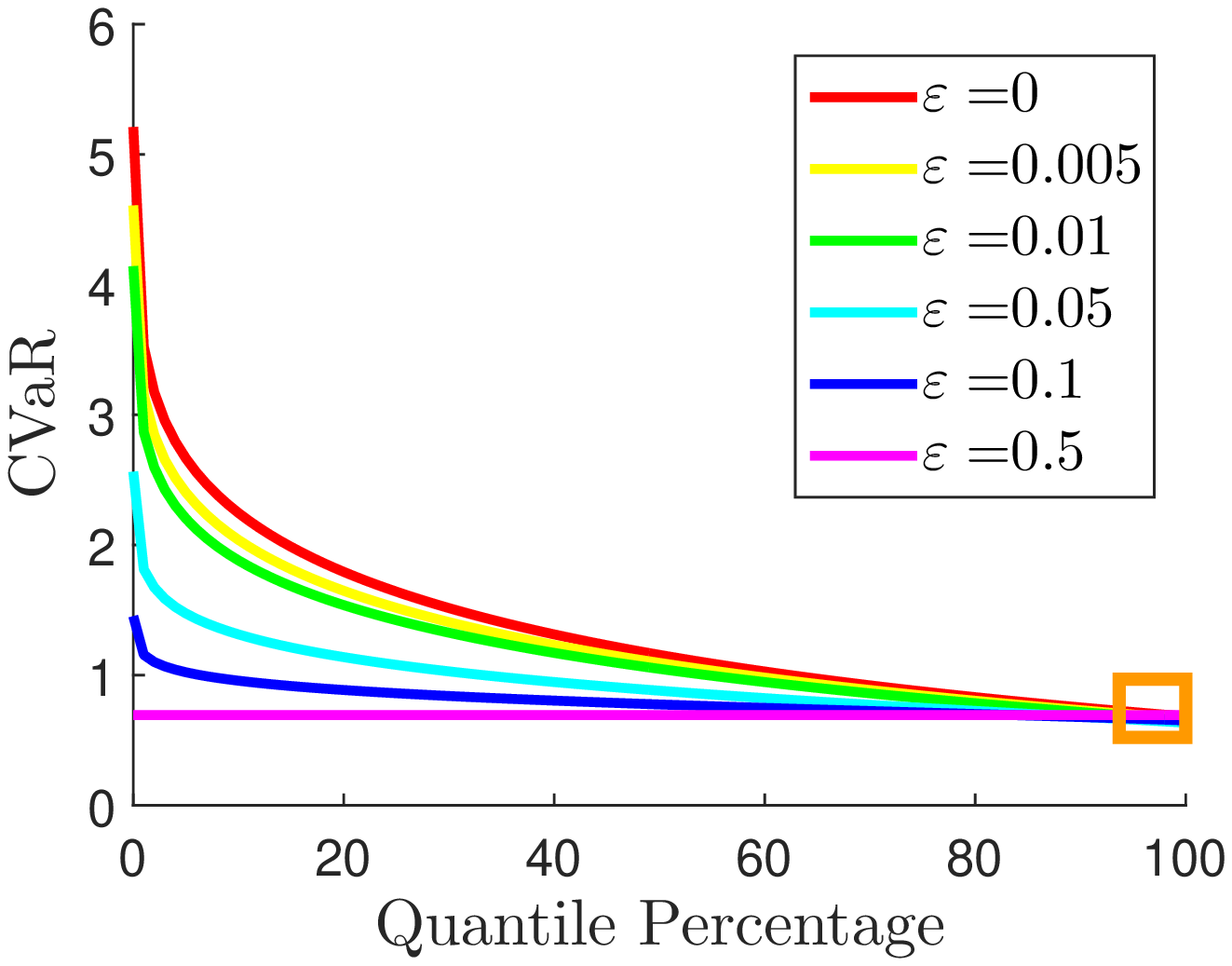}} \hspace{0pt}
		\subfigure[CVaR versus quantile of the logloss function (zoomed)]{\label{2figs-b} \includegraphics[width=0.3\columnwidth]{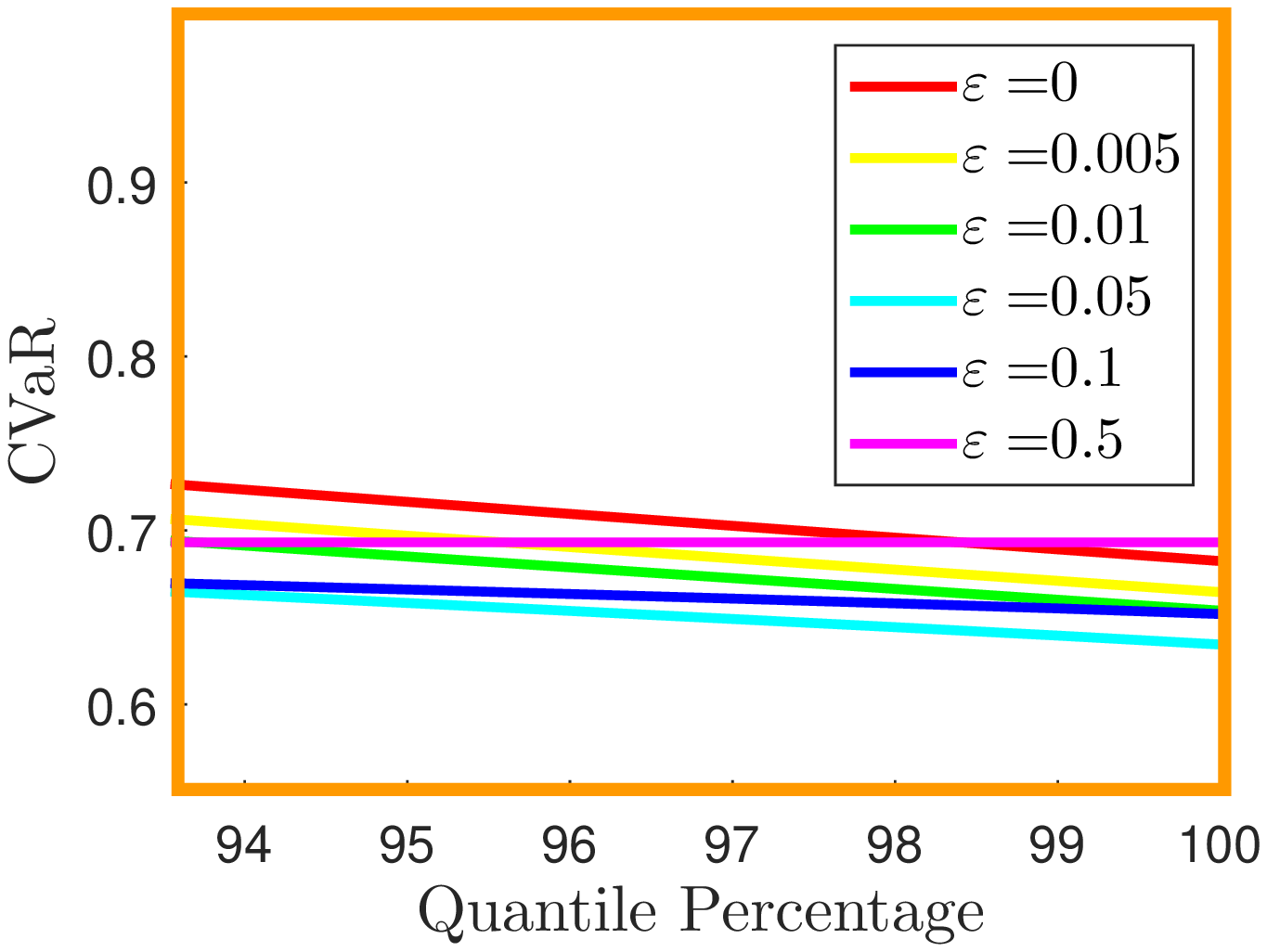}} \hspace{0pt}
		\subfigure[Cumulative distribution of the logloss function]{\label{2figs-c} \includegraphics[width=0.3\columnwidth]{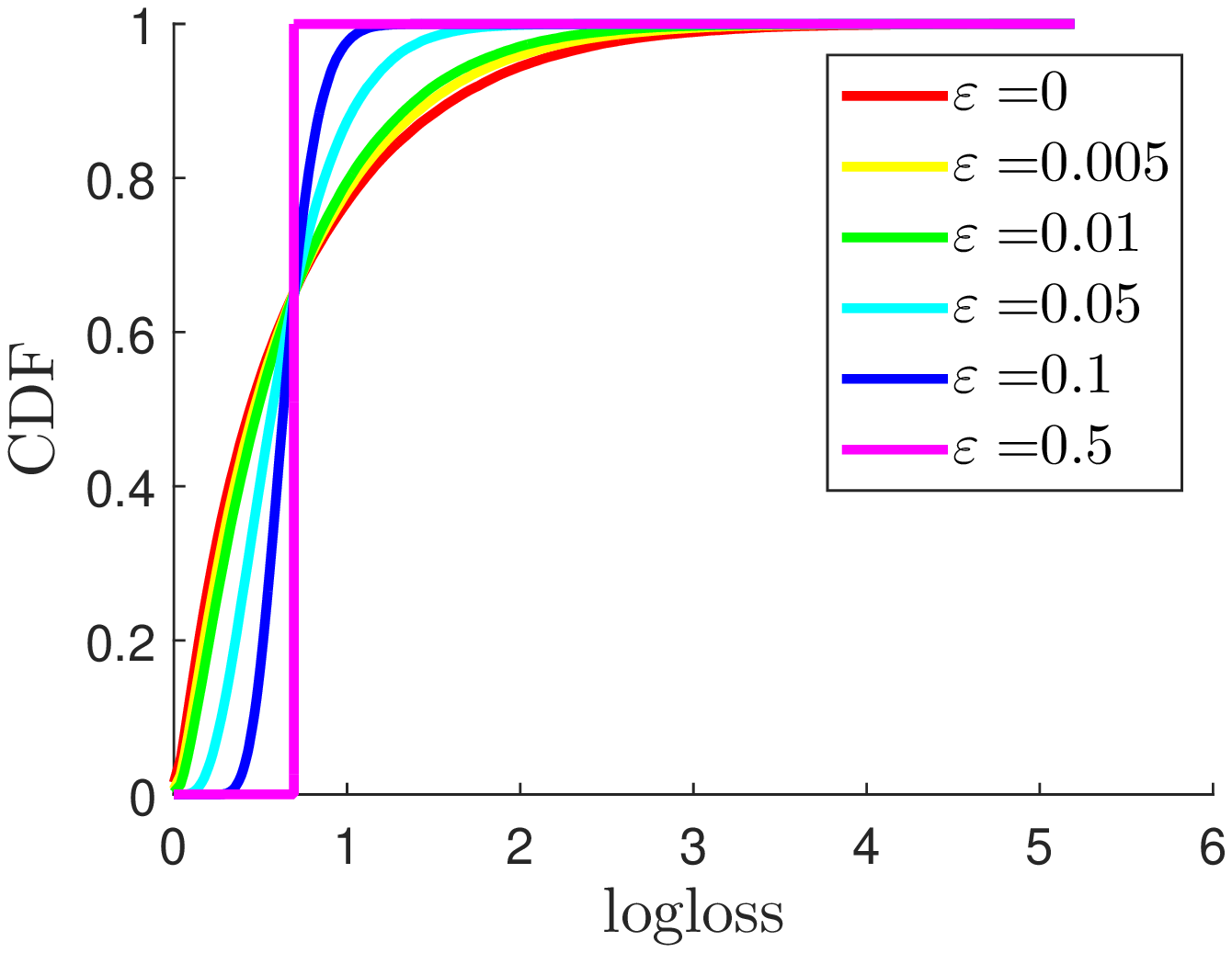}} 
		\caption{CVaR and CDF of the logloss function for different Wasserstein radii $\varepsilon$}
		\label{2figs}
	\end{figure*} 
	As expected, using a distributionally robust approach renders the logistic regression problem more `risk-averse', which results in uniformly lower CVaR values of the logloss, particularly for smaller confidence levels. Thus, increasing the radius of the Wasserstein ball reduces the right tail of the logloss distribution. Figure~\ref{2figs-c} confirms this observation by showing that the cumulative distribution function (CDF) of the logloss converges to a step function for large $\varepsilon$. Moreover, one can prove that the weight vector $\hat{\beta}$ tends to zero as $\varepsilon$ grows. Specifically, for $\varepsilon \geq 0.1$ we have $\beta\approx 0$, in which case the logloss approximates the deterministic value $\log(2)=0.69$. Zooming into the CVaR graph of Figure~\ref{2figs-a} at the end of the high confidence levels, we observe that the 100\%-CVaR, which coincides in fact with the expected logloss, increases at {\em every} quantile level; see Figure~\ref{2figs-b}. 
	
	
	\subsection{Experiment 3: Real World Case Studies and Risk Estimation}
	
	Next, we validate the performance of the proposed distributionally robust logistic regression method on the MNIST dataset \cite{MNIST} and three popular datasets from the UCI repository: Ionosphere, Thoracic Surgery, and Breast Cancer \cite{UCI2013}. In this experiment, we use the distance function of Definition~\ref{distance} with the $\ell_1$-norm. We examine three different models: logistic regression (LR), regularized logistic regression (RLR), and distributionally robust logistic regression with $\kappa=1$ (DRLR). All results reported here are averaged over 100 independent trials. In each trial related to a UCI dataset, we randomly select 60\% of data to train the models and the rest to test the performance. Similarly, in each trial related to the MNIST dataset, we randomly select $10^3$ samples from the training dataset, and test the performance on the complete test dataset. The results in Table~\ref{table1}~(top) indicate that DRLR outperforms RLR in terms of CCR by about the same amount by which RLR outperforms classical LR (0.3\%--1\%), consistently across all experiments. We also evaluated the out-of-sample CVaR of logloss, which is a natural performance indicator for robust methods. Table~\ref{table1}~(bottom) shows that DRLR wins by a large margin (outperforming RLR by 4\%--43\%).
	
	\begin{table} [h]
		\centering
		\caption{The average and standard deviation of CCR and CVaR evaluated on the test dataset.}	
		\bgroup
		\def\arraystretch{1.05}	
		\begin{tabular}{|c|l|ccc|}
			\cline{3-5}
			\multicolumn{2}{c|}{} & LR & RLR &  DRLR \\ \hline 	
			\multirow{6}{*}{CCR} 
			& Ionosphere & $84.8 \pm 4.3\%$ & $86.1 \pm 3.1\%$ & $87.0 \pm 2.6\%$ \\
			& Thoracic Surgery & $82.7 \pm 2.0\%$ & $83.1 \pm 2.0\%$ & $83.8 \pm 2.0\%$ \\
			& Breast Cancer & $94.4 \pm 1.8\%$ & $95.5 \pm 1.2\%$  & $95.8 \pm 1.2\%$ \\
			& MNIST 1 vs 7  & $ 97.8 \pm 0.6 \%$ & $ 98.0 \pm 0.3\% $  & $ 98.6 \pm 0.2\% $ \\
			& MNIST 4 vs 9  & $ 93.7 \pm 1.1\% $ & $ 94.6 \pm 0.5 \% $  & $ 95.1 \pm 0.4\% $ \\
			& MNIST 5 vs 6  & $ 94.9 \pm 1.6\% $ & $ 95.7 \pm 0.5\% $  & $ 96.7 \pm 0.4\% $ \\ \hline
			\multirow{6}{*}{CVaR} 
			& Ionosphere & $ 10.5 \pm 6.9 ~ \, ~ ~$ & $ 4.2 \pm 1.5 \, ~$ & $ 3.5 \pm 2.0 \, ~$  \\
			& Thoracic Surgery & $ 3.0 \pm 1.9 ~ ~ $ & $ 2.3 \pm 0.3 \, ~$ & $ 2.2 \pm 0.2 \, ~$  \\
			& Breast Cancer & $ 20.3 \pm 15.1 ~ ~ $ & $ 1.3 \pm 0.4 \, ~ $ & $ 0.9 \pm 0.2 \, ~$  \\
			& MNIST 1 vs 7  & $ 3.9 \pm 2.8 ~ ~ $ & $ 0.67 \pm 0.13 \, ~ $  & $ 0.38 \pm 0.06 \, ~$ \\
			& MNIST 4 vs 9  & $  8.7 \pm 6.5 ~ ~ $ & $ 1.45 \pm 0.20 \, ~ $  & $ 1.09 \pm 0.08 \, ~ $ \\
			& MNIST 5 vs 6  & $ 14.1 \pm 9.5 ~ \, ~ ~$ & $ 1.35 \pm 0.20 \, ~ $  & $ 0.84 \pm 0.08 \, ~$ \\ \hline								
		\end{tabular}	
		\egroup
		\label{table1}	
	\end{table}			
	In the remainder we focus on the Ionosphere case study (the results of which are representative for the other case studies). Figures \ref{figs-b} and \ref{figs-a} depict the logloss and the CCR for different Wasserstein radii $\varepsilon$.		
	DRLR ($\kappa =1$) outperforms RLR ($\kappa = \infty$) consistently for all sufficiently small values of~$\varepsilon$. This observation can be explained by the fact that DRLR accounts for uncertainty in the label, whereas RLR does not. Thus, there is a wider range of Wasserstein radii that result in an attractive out-of-sample logloss and CCR. 
	This effect facilitates the choice of $\varepsilon$ and could be a significant advantage in situations where it is difficult to determine $\varepsilon$ a priori.	 
	
	\begin{figure*}[t]
		\centering
		\subfigure[The average logloss for different $\kappa$]{\label{figs-b} 
			\includegraphics[width=0.3\columnwidth]{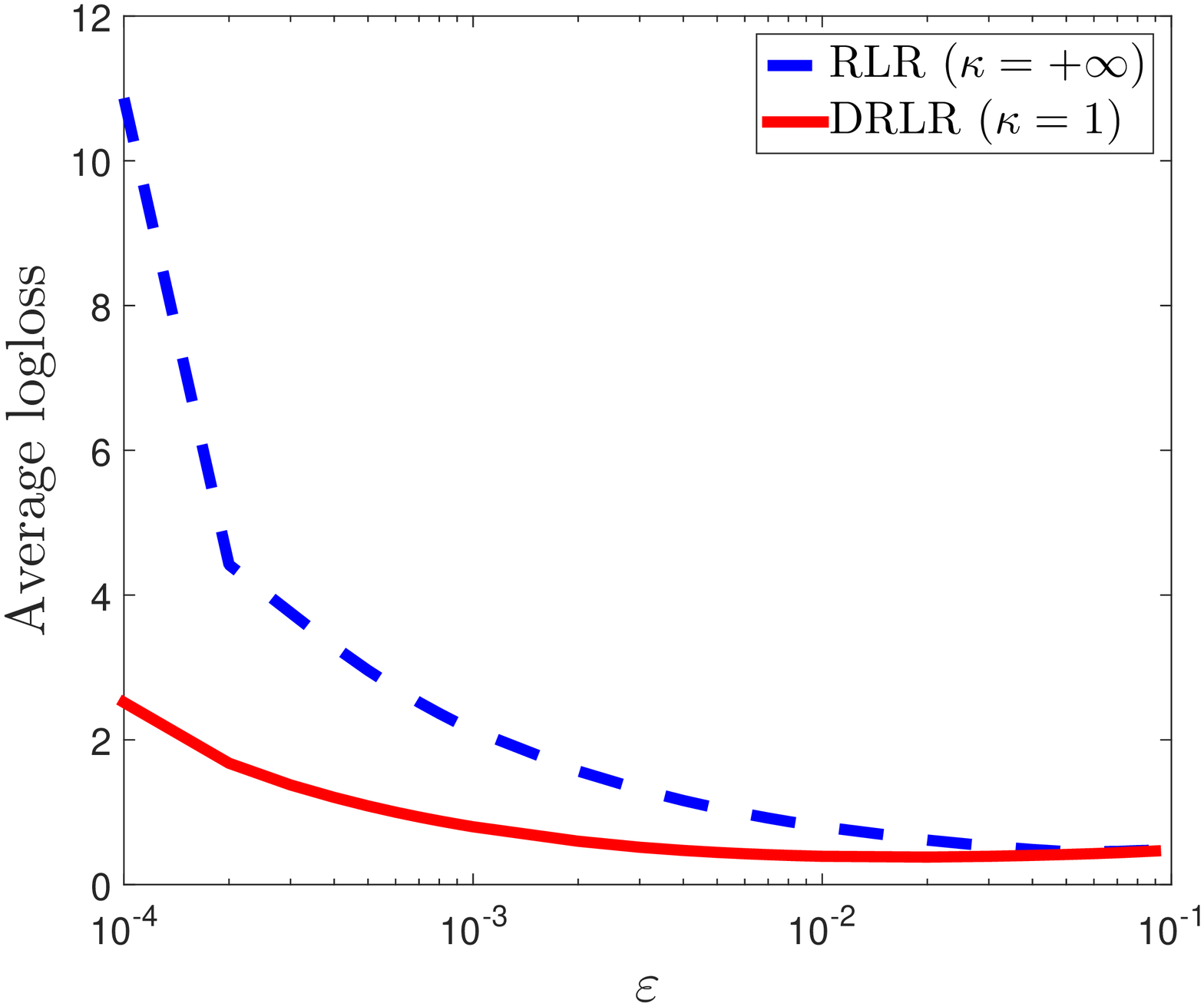}} \hspace{0pt}
		\subfigure[The average correct classification rate for different $\kappa$]{\label{figs-a} 
			\includegraphics[width=0.3\columnwidth]{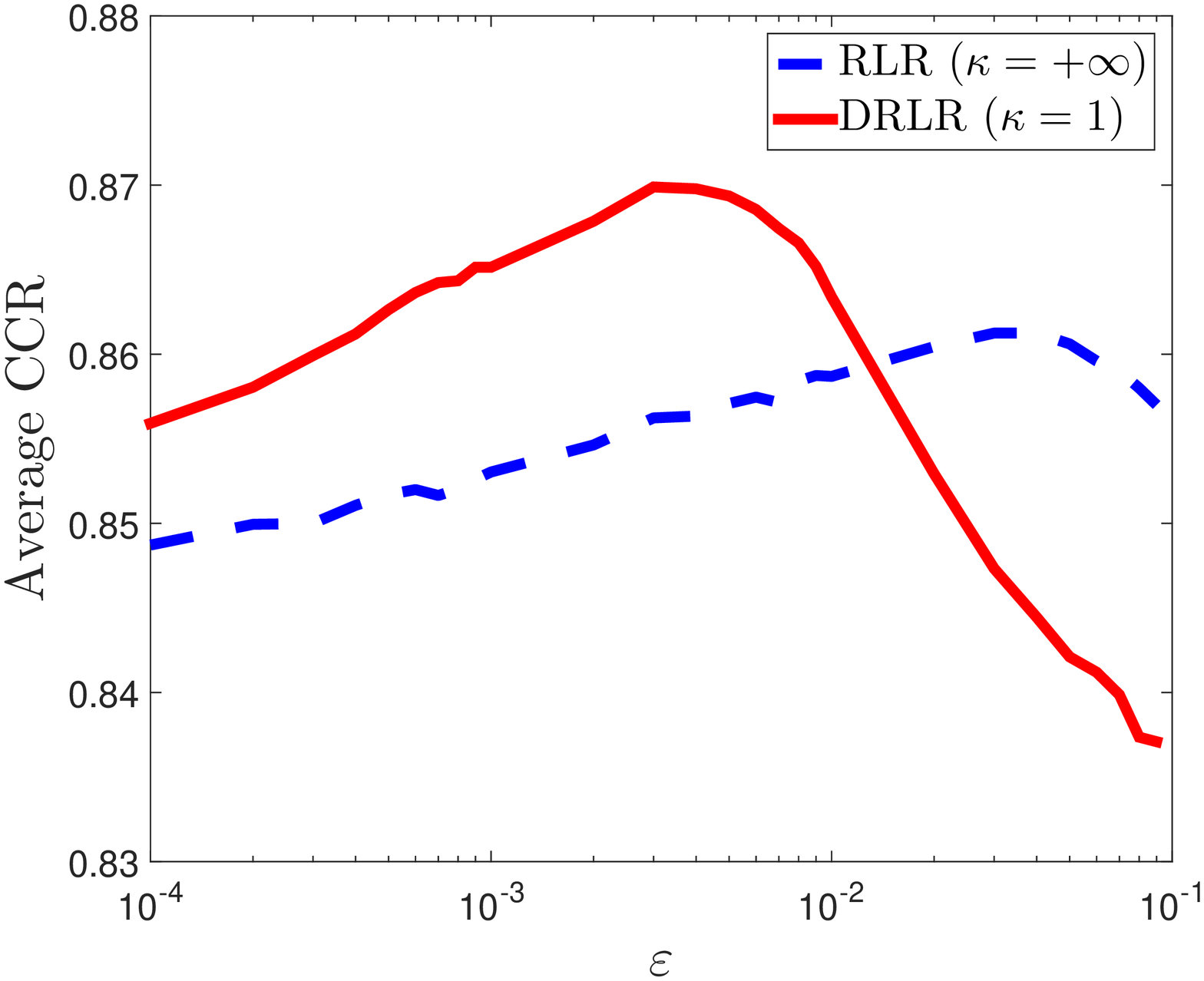}} \hspace{0pt}
		\subfigure[Risk estimation and its confidence level]{\label{figs-c} 
			\includegraphics[width=0.3\columnwidth]{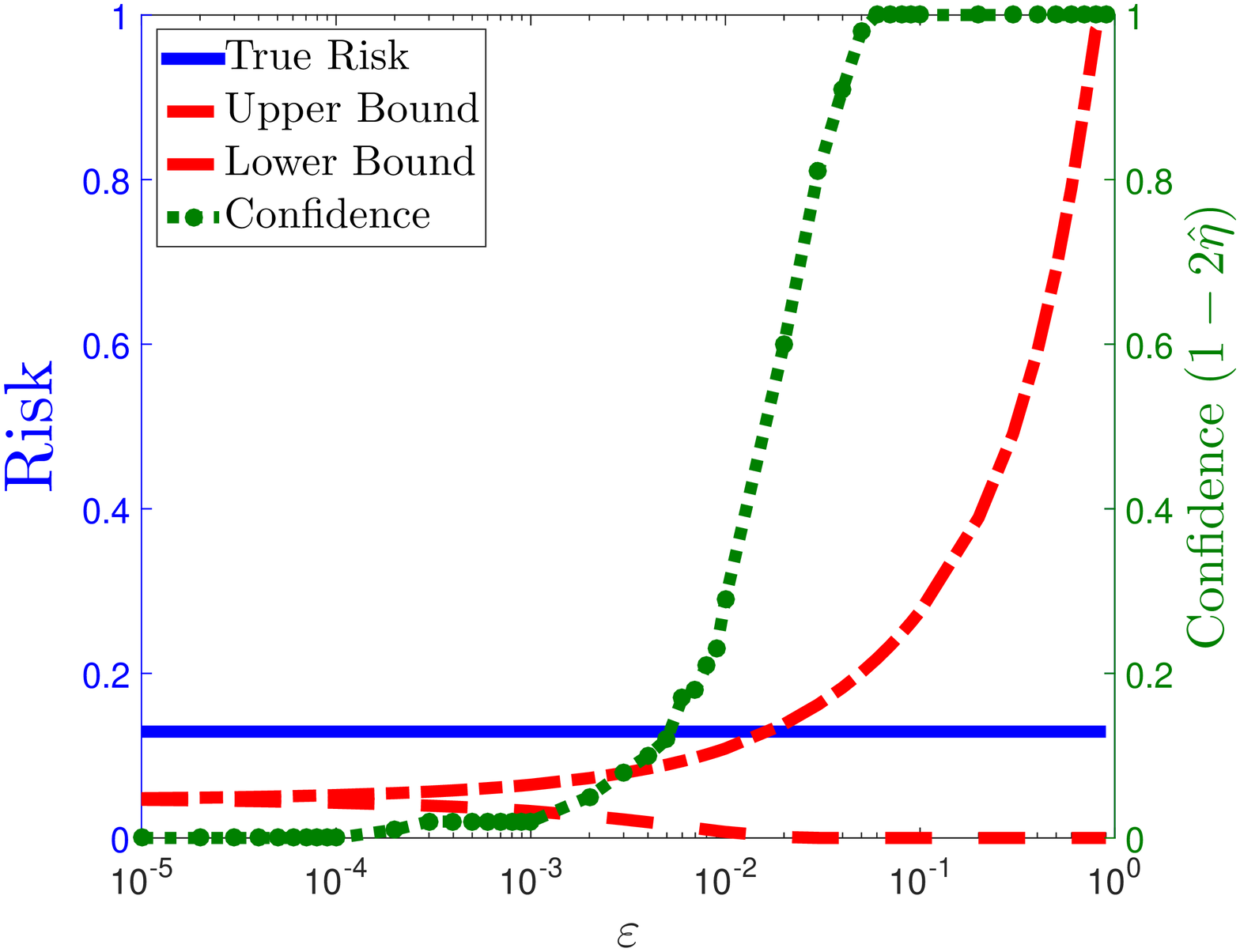}} \hspace{0pt}
		\caption{Average logloss, CCR and risk for different Wasserstein radii $\varepsilon$ (Ionosphere dataset)} 
		\label{figs}
	\end{figure*} 
	In the experiment underlying Figure~\ref{figs-c}, we first fix $\hat{\beta}$ to the optimal solution of \eqref{tractable} for $\varepsilon=0.003$ and $\kappa=1$. Figure~\ref{figs-c} shows the true risk $\risk(\hat{\beta})$ and its confidence bounds.
	As expected, for $\varepsilon=0$ the upper and lower bounds coincide with the empirical risk on the training data, which is a lower bound for the true risk on the test data due to over-fitting effects. 
	As $\varepsilon$ increases, the confidence interval between the bounds widens and eventually covers the true risk. For instance, at $\varepsilon \approx 0.05$ the confidence interval is given by $[0,0.19]$ and contains the true risk with probability $1-2\hat{\eta}=95\%$.
	
	\section{Appendix}
	
	\subsection{Proof of Theorem~\ref{Theorem 1}}
	The proof of Theorem~\ref{Theorem 1} requires the following preparatory lemma.	
	
	\begin{Lmm} \label{lemma1}
		Consider the convex function $h_\beta(\xi):=\log(1+\exp(- \langle \beta , \xi\rangle))$ where $\beta, \xi \in \mathbb{R}^n$. Then, we have
		\begin{align*}
		\sup\limits _{\xi \in \mathbb{R}^n} & h_\beta(\xi) - \lambda \|\hat{\xi}-\xi\| =\begin{cases}
		h_\beta(\hat{\xi}) \quad &	\text{if } \|\beta\|_* \leq \lambda, \\
		-\infty & \text{otherwise}, 
		\end{cases}
		\end{align*}
		for every $\lambda>0$, where $\| \cdot \|_*$ is the dual norm of $\| \cdot \|$, i.e.,
		$\|\beta\|_* := \sup_{\| \xi \| \leq 1} \langle \beta , \xi \rangle.$
		\end{Lmm}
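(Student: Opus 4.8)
The plan is to reduce everything to the observation that $h_\beta$ is globally Lipschitz with respect to $\|\cdot\|$ and that its Lipschitz modulus is exactly $\|\beta\|_*$. Writing $h_\beta(\xi)=g(-\langle\beta,\xi\rangle)$ for the scalar softplus $g(t):=\log(1+e^t)$, I would first note that $g'(t)=(1+e^{-t})^{-1}\in(0,1)$, so $g$ is $1$-Lipschitz on $\mathbb{R}$. Composing with the linear map $\xi\mapsto-\langle\beta,\xi\rangle$ and invoking the defining inequality of the dual norm, $|\langle\beta,u\rangle|\le\|\beta\|_*\|u\|$, then yields $|h_\beta(\xi)-h_\beta(\xi')|\le|\langle\beta,\xi-\xi'\rangle|\le\|\beta\|_*\|\xi-\xi'\|$ for all $\xi,\xi'\in\mathbb{R}^n$. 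This single estimate powers both branches of the claim, and notably it uses only Lipschitz continuity rather than the convexity of $h_\beta$.

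For the branch $\|\beta\|_*\le\lambda$, the Lipschitz bound gives, for every $\xi$,
\[
h_\beta(\xi)-\lambda\|\hat\xi-\xi\|\le h_\beta(\hat\xi)+(\|\beta\|_*-\lambda)\|\xi-\hat\xi\|\le h_\beta(\hat\xi),
\]
so the supremum is at most $h_\beta(\hat\xi)$; evaluating the objective at $\xi=\hat\xi$, where the penalty vanishes, shows the supremum is also at least $h_\beta(\hat\xi)$, and equality follows.

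For the complementary branch $\|\beta\|_*>\lambda$ I would produce a maximizing sequence driving the objective upward without bound. Using finite-dimensional compactness of the unit ball, choose a unit vector $v$ with $\langle\beta,v\rangle=\|\beta\|_*$ and set $\xi_t:=\hat\xi-tv$ for $t>0$; then $\|\hat\xi-\xi_t\|=t$ and, since $\log(1+e^s)\ge s$,
\[
h_\beta(\xi_t)-\lambda\|\hat\xi-\xi_t\|\ge\big(-\langle\beta,\hat\xi\rangle+t\|\beta\|_*\big)-\lambda t=-\langle\beta,\hat\xi\rangle+t(\|\beta\|_*-\lambda),
\]
which tends to $+\infty$ as $t\to\infty$. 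Incidentally, because $h_\beta\ge0$ and the penalty is zero at $\hat\xi$, the supremum is always at least $h_\beta(\hat\xi)\ge0$; the extremal value in the infeasible case is therefore $+\infty$ (rather than any finite quantity), which is exactly what is needed for the $\|\beta\|_*\le\lambda$ constraint to appear in Theorem~\ref{Theorem 1}. The main obstacle, such as it is, lies in pinning down the \emph{sharp} Lipschitz constant $\|\beta\|_*$---its tightness is what makes the two branches meet precisely at the threshold $\|\beta\|_*=\lambda$---together with checking that the dual norm is attained so that the divergent direction $v$ genuinely exists.
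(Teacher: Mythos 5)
Your proof is correct, but it takes a genuinely different route from the paper's. The paper proceeds by convex conjugacy: it computes the conjugate $f^*$ of the scalar softplus, writes $h_\beta$ as its biconjugate (an upper envelope of affine functions indexed by $\theta\in[0,1]$), replaces the norm penalty by $\sup_{\|q\|_*\le\lambda}\langle q,\hat\xi-\xi\rangle$, and swaps $\sup_\xi$ with $\inf_q$ via a minimax theorem (Proposition~5.5.4 of Bertsekas), after which the inner maximization over $\xi$ forces $\theta\beta+q=0$ and collapses the expression to $f^{**}(\langle\beta,\hat\xi\rangle)=h_\beta(\hat\xi)$ under the constraint $\|\beta\|_*\le\lambda$. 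You instead use only the sharp Lipschitz estimate $|h_\beta(\xi)-h_\beta(\xi')|\le\|\beta\|_*\|\xi-\xi'\|$ (from $0<g'<1$ for the softplus and the dual-norm inequality) to bound the supremum above by $h_\beta(\hat\xi)$ when $\|\beta\|_*\le\lambda$, and an explicit divergent sequence $\xi_t=\hat\xi-tv$ along a norm-attaining direction $v$ together with $\log(1+e^s)\ge s$ to show the supremum is $+\infty$ when $\|\beta\|_*>\lambda$. Your argument is more elementary---it avoids conjugate functions and the minimax swap entirely---and it isolates exactly what drives the dichotomy, namely the tightness of the Lipschitz constant $\|\beta\|_*$; the paper's duality-based derivation is heavier but generalizes more mechanically to other convex losses (it is reused almost verbatim for the indicator loss in Theorem~\ref{Theorem 3}). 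One further point in your favor: the lemma as printed reports $-\infty$ in the second branch, which cannot be right since the objective equals $h_\beta(\hat\xi)\ge 0$ at $\xi=\hat\xi$; both the paper's own concluding display and your argument yield $+\infty$, which is also the value needed for the constraint $\|\beta\|_*\le\lambda$ to emerge in \eqref{suplm1}, and you flag this explicitly.
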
	
		
		\begin{proof} [Proof of Lemma \ref{lemma1}] 
			Note that $h_{\beta}(\xi)-\lambda\|\hat{\xi}-\xi\|$ constitutes a difference of convex functions in $\xi$, and thus it is neither convex nor concave. In order to maximize this function, we re-express its convex part as an upper envelope of infinitely many affine functions. To this end, we first consider $f(t):=\log(1+\exp(-t))$. Using the standard convention $0\log0=0$, the conjugate function of $f(t)$ can be expressed as
			\begin{align*}
			f^*(\theta)= 
			\left\{ \begin{array}{l@{~}l}
			\theta\log(\theta)+(1-\theta)\log(1-\theta) \quad & \text{if } \theta \in [0,1], \\[1ex]
			+\infty & \text{otherwise,}
			\end{array}\right.
			\end{align*}
			see e.g.\  \cite{boyd2009convex} for the general definition of conjugate functions. The conjugate of $h_\beta(\xi)=f(\langle \beta , \xi \rangle)$ is therefore given by
			\begin{align*}
			h_{\beta}^*(z)=
			\begin{cases}
			\inf\limits_{0\leq\theta\leq1} f^*(\theta) \quad & \text{if } z =\theta \beta, \\
			+\infty & \text{otherwise.}
			\end{cases}
			\end{align*}
			As the logloss function $h_\beta(\xi)$ is convex and continuous, it coincides with its bi-conjugate, that is, 
			\begin{align*}
			h_{\beta}(\xi) = h_{\beta}^{**}(\xi) & = \sup_{z \in \mathbb{R}^n} \langle z, \xi \rangle - h_{\beta}^{*}(z) \notag \\
			& = \sup\limits_{0 \leq \theta \leq 1} 
			\langle \theta\beta, \xi \rangle -f^*(\theta).
			\end{align*}
			In other words, we have represented $h_{\beta}(\xi)$ as the upper envelope of infinitely many linear functions. Using this representation, we obtain
			\begin{align*}
			\quad \sup\limits_{\xi\in\mathbb{R}^n} h_\beta(\xi) - \lambda \|\hat{\xi}-\xi\| & = \sup\limits_{\xi\in\mathbb{R}^n} h^{**}_{\beta}(\xi) - \lambda \|\hat{\xi}-\xi\| \notag \\
			& = \sup\limits_{0 \leq \theta \leq 1} \sup_{\xi\in\mathbb{R}^n}
			\langle \theta\beta , \xi \rangle -f^*(\theta) - \lambda \|\hat{\xi}-\xi\| \notag \\
			&=\sup\limits_{0 \leq \theta \leq 1} \sup_{\xi \in \mathbb{R}^n} 
			\langle \theta\beta , \xi \rangle -f^*(\theta) - \sup\limits_{\|q\|_*\leq\lambda} \langle q , \hat{\xi}-\xi \rangle \notag \\
			&=\sup\limits_{0 \leq \theta \leq 1} \sup_{\xi \in \mathbb{R}^n} \inf\limits_{\|q\|_* \leq \lambda}
			\langle \theta\beta , \xi \rangle -f^*(\theta) - \langle q , \hat{\xi}-\xi \rangle \notag \\
			&=\sup\limits_{0 \leq \theta \leq 1} \inf\limits_{\|q\|_* \leq \lambda}
			\sup_{\xi \in \mathbb{R}^n} \langle \theta \beta + q , \xi \rangle - f^*(\theta) -  \langle q , \hat{\xi} \rangle, \notag
			\end{align*}
			where the third equality follows from the definition of the dual norm, and the last equality holds due to Proposition~5.5.4 in \cite{bertsekas2009convex}. Explicitly evaluating the maximization over $\xi$ shows that the above expression is equivalent to	
			\begin{align*}
			&\begin{cases}
			\sup\limits_{0 \leq \theta \leq 1} \inf\limits_{\| q \|_* \leq \lambda}&
			-f^*(\theta)-\langle q , \hat{\xi} \rangle \\
			\text{~ s.t.} & \theta \beta + q = 0
			\end{cases} \notag \\
			=&\begin{cases}
			\sup\limits_{0 \leq \theta \leq 1} 	-f^*(\theta) + \langle \theta \beta , \hat{\xi} \rangle & \text{if }\sup\limits_{0 \leq \theta \leq 1} \| \theta \beta \|_* \leq \lambda, \\
			+\infty & \text{otherwise.}
			\end{cases} \notag
			\end{align*}
			In summary, we conclude that
			\begin{align*}
			\sup\limits_{\xi\in\mathbb{R}^n} \, h_\beta(\xi) - \lambda \|\hat{\xi}-\xi\| = \begin{cases}
			f^{**}(\langle \beta , \hat{\xi} \rangle) = h_{\beta}(\hat{\xi})  \qquad  & \text{if }\|\beta\|_*\leq\lambda, \\
			+\infty & \text{otherwise.}
			\end{cases}
			\end{align*}
			Thus, the claim follows.
			\end{proof}
			
			\begin{proof} [Proof of Theorem~\ref{Theorem 1}]
			Theorem~\ref{Theorem 1} generalizes Theorem~4.3 in \cite{MohKun-14}, where the random variable $\xi$ has only continues components. By contrast, in this paper the random variable $\xi$ displays a \emph{hybrid} structure comprising continuous (features) as well as discrete (labels) components. 
			Recall that $\xi:=(x,y)$ and $\Xi=\mathbb{R}^n \times \{-1,+1\}$. For ease of notation, we use the shorthand $l_\beta(\xi)$ for $l_\beta(x,y)$. 
			\begin{subequations}
				By definition of the Wasserstein ball we have
				\begin{align}
				\sup\limits _ {\mathds{Q} \in \mathds{B}_\varepsilon(\hat{\mathds{P}}_N)} \mathds{E}^\mathds{Q}  [l_{\beta}(\xi)]~
				&= \sup\limits _ {\mathds{Q} \in \mathds{B}_\varepsilon(\hat{\mathds{P}}_N)} \int_{\Xi} l_{\beta}(\xi)\mathds{Q}(\mathrm{d}\xi) \notag \\
				& = \begin{cases}
				\sup\limits _ {\Pi \in M(\Xi^2)}  & \int _ \Xi l_\beta(\xi) \Pi(\mathrm{d}\xi,\Xi) \\
				\quad \text{s.t. }  & \int _ {\Xi^2} d\big(\xi,\xi'\big) \Pi(\mathrm{d}\xi,\mathrm{d}\xi') \leq \varepsilon \\
				&\Pi(\Xi,\mathrm{d}\xi')=\hat{\mathds{P}}_N (\mathrm{d}\xi')
				\end{cases} \notag \\
				& = \begin{cases}
				\sup\limits_{\mathds{Q}^i} & \frac{1}{N} \sum\limits_{i=1}^{N} \int_{\Xi}
				l_{\beta}(\xi)\mathds{Q}^i(\mathrm{d}\xi) \\
				\text{s.t. } & \frac{1}{N} \sum\limits_{i=1}^{N} \int_{\Xi} d\big(\xi,\xi'\big) \mathds{Q}^i(\mathrm{d}\xi) \leq \varepsilon \\
				& \int_{\Xi}\mathds{Q}^i(\mathrm{d}\xi)=1.
				\end{cases} \notag
				\end{align}
				The last equality exploits the substitution $\Pi(\mathrm{d}\xi,\mathrm{d}\xi')=\frac{1}{N}\sum_{i=1}^{N}\delta_{\hat{x}_i,\hat{y}_i}(\mathrm{d}\xi')
				\mathds{Q}^i(\mathrm{d}\xi)$. Here we use the fact that the marginal distribution $\hat{\mathds{P}}_N$ of $\xi'$ is discrete, which implies that $\Pi$ is completely determined by the conditional distributions $\mathds Q^i$ of $\xi$ given $\xi'=\hat \xi_i:=(\hat x_i,\hat y_i)$.
				By replacing $\xi$ with $(x,y)$ and decomposing each distribution $\mathds Q^i$ into unnormalized measures $\mathds Q^i_{\pm 1}(\mathrm{d}x)=\mathds Q^i(\mathrm{d}x, \{y=\pm 1\})$ supported on $\mathbb{R}^n$, the above expression simplifies to 
				\begin{align}\label{aux1}
				\begin{cases}
				\sup\limits_{\mathds{Q}^i_{\pm 1}} & \frac{1}{N} \sum\limits_{i=1}^{N} \int_{\mathbb{R}^n}
				l_{\beta}(x,+1)\mathds{Q}^i_{+1}(\mathrm{d}x)+l_{\beta}(x,-1)\mathds{Q}^i_{-1}(\mathrm{d}x)\\[1ex]
				\text{s.t. } & \frac{1}{N}\sum\limits_{i=1}^{N} \int_{\mathbb{R}^n} d((x,+1)-(\hat{x}_i,\hat{y}_i)) \mathds{Q}^i_{+1}(\mathrm{d}x)\\
				& \quad \,\, +\int_{\mathbb{R}^n} d((x,-1)-(\hat{x}_i,\hat{y}_i)) \mathds{Q}^i_{-1}(\mathrm{d}x)\leq \varepsilon \\
				& \int_{\mathbb{R}^n}\mathds{Q}^i_{+1}(\mathrm{d}x)+\mathds{Q}^i_{-1}(\mathrm{d}x)=1.
				\end{cases}
				\end{align}
				Using the Definition~\ref{distance} of the metric $d(\cdot,\cdot)$ on $\Xi$, we can reformulate \eqref{aux1} as
				\begin{align}
				\begin{cases}
				\sup\limits_{\mathds{Q}^i_{\pm 1}} &
				\frac{1}{N} \sum\limits_{i=1}^{N} \int_{\mathbb{R}^n} l_{\beta}(x,+1)\mathds{Q}^i_{+1}(\mathrm{d}x) +l_{\beta}(x,-1)\mathds{Q}^i_{-1}(\mathrm{d}x) \\[1ex]
				\text{s.t.} &
				\frac{1}{N} \int_{\mathbb{R}^n} \sum\limits_{\hat{y}_i=+1} \big[		
				\|\hat{x}_i-x\| \mathds{Q}^i_{+1}(\mathrm{d}x) + \|\hat{x}_i-x\| \mathds{Q}^i_{-1}(\mathrm{d}x) +
				\kappa \mathds{Q}^i_{-1}(\mathrm{d}x) \big] + \\
				&\frac{1}{N} \int_{\mathbb{R}^n} \sum\limits_{\hat{y}_i=-1} \big[ \|\hat{x}_i-x\|\mathds{Q}^i_{-1}(\mathrm{d}x) + \|\hat{x}_i-x\|\mathds{Q}^i_{+1}(\mathrm{d}x)+
				\kappa\mathds{Q}^i_{+1}(\mathrm{d}x) \big] \leq \varepsilon\\
				&\int_{\mathbb{R}^n}\mathds{Q}^i_{+1}(\mathrm{d}x)+\mathds{Q}^i_{-1}(\mathrm{d}x)=1.
				\end{cases}
				\end{align}
				Rearranging the above equation leads to 
				\begin{align}\label{aux2}
				\begin{cases}
				\sup\limits_{\mathds{Q}^i_{\pm 1}} &
				\frac{1}{N} \sum\limits_{i=1}^{N} \int_{\mathbb{R}^n} l_{\beta}(x,+1)\mathds{Q}^i_{+1}(\mathrm{d}x) +l_{\beta}(x,-1)\mathds{Q}^i_{-1}(\mathrm{d}x) \\[1ex]
				\text{s.t.}&
				\frac{1}{N} \int_{\mathbb{R}^n} 
				\frac{\kappa}{N}\sum\limits_{\hat{y}_i=+1}\mathds{Q}^i_{-1}(\mathrm{d}x)  
				+ \frac{\kappa}{N}\sum\limits_{\hat{y}_i=-1}\mathds{Q}^i_{+1}(\mathrm{d}x) ~ + \\
				&\hspace{1cm} \sum\limits_{i=1}^{N} \|\hat{x}_i-x\| \big( \mathds{Q}^i_{-1} (\mathrm{d}x) + \mathds{Q}^i_{+1} (\mathrm{d}x) \big) \leq \varepsilon\\
				&\int_{\mathbb{R}^n}\mathds{Q}^i_{+1}(\mathrm{d}x)+\mathds{Q}^i_{-1}(\mathrm{d}x)=1.
				\end{cases}
				\end{align}
				The infinite-dimensional optimization problem \eqref{aux2} over the measures $\mathds Q^i_{\pm 1}$ admits the following semi-infinite dual. 
				\begin{align} \label{important}
				\begin{cases}
				\inf\limits_{\lambda,s_i}&\lambda\varepsilon+\frac{1}{N}\sum\limits_{i=1}^{N}s_i\\
				\text{s.t.}
				& \sup\limits_{x\in\mathbb{R}^n} l_{\beta}(x,+1)-\lambda\|\hat{x}_i-x\| -\frac{1}{2}\lambda\kappa (1-\hat{y}_i) \leq s_i \qquad \forall i\leq N \\
				& \sup\limits_{x\in\mathbb{R}^n} l_{\beta}(x,-1)-\lambda\|\hat{x}_i-x\| -\frac{1}{2}\lambda\kappa (1+\hat{y}_i) \leq s_i \qquad \forall i\leq N \\
				&\lambda\geq0
				\end{cases}
				\end{align}
				Strong duality holds for any $\varepsilon>0$ due to Proposition~3.4 in \cite{Shapiro-conic-duality}. 
				Lemma~\ref{lemma1} then implies
				\begin{align} \label{suplm1}
				\begin{cases}
				\min\limits_{\beta,\lambda,s_i}&\lambda\varepsilon+\frac{1}{N}\sum\limits_{i=1}^{N}s_i\\
				\text{s.t.}
				& l_{\beta}(\hat{x}_i,+1)-\frac{1}{2}\lambda\kappa(1-\hat{y}_i)\leq s_i \quad\forall i \leq N \\	
				& l_{\beta}(\hat{x}_i,-1)-\frac{1}{2}\lambda\kappa(1+\hat{y}_i)\leq s_i \quad\forall i \leq N \\
				&\|\beta\|_*\leq\lambda.
				\end{cases}
				\end{align}
				\end{subequations}	
				We can rewrite the optimization program \eqref{suplm1} as
				\begin{align*}
				= \begin{cases}
				\min\limits_{\beta,\lambda,s_i}&\lambda\varepsilon+\frac{1}{N}\sum\limits_{i=1}^{N}s_i\\
				\text{s.t.}
				& l_{\beta}(\hat{x}_i,\hat{y}_i) \leq s_i \hspace{1.6cm} \forall i \leq N \\
				& l_{\beta}(\hat{x}_i,-\hat{y}_i)- \lambda \kappa \leq s_i \hspace{0.5cm} \forall i \leq N \\
				&\|\beta\|_*\leq\lambda,
				\end{cases}
				\end{align*}
				and thus the claim follows.
				\end{proof}
				
				\subsection{Proof of Theorem~\ref{Theorem 2}}
				\begin{proof} 
					Define the constant $$\widetilde{A}:=\mathds{E}^\mathds{P}[\exp(d((x,y),(0,+1))^a)].$$ As $d((x,y),(0,+1))\leq \|x\|+\kappa$, we have
					\begin{align*}
					\widetilde{A}& \leq  \mathds{E}^\mathds{P}[\exp((\|x\|+\kappa)^a)]  = \mathds{E}^\mathds{P}[\exp(\kappa^a(\|x\|/\kappa+1)^a)] .
					\end{align*}
					Moreover, it is easy to see that the inequality 
					\begin{align*}
					(\| x \| + 1)^a \leq 2^{a-1} ~ (\| x \|^a + 1) 
					\end{align*}
					holds for all $a>1$, which implies that
					\begin{align*}
					\widetilde{A} & \leq \mathds{E}^\mathds{P}[ \exp(2^{a-1} \kappa^a ((\| x \|/\kappa)^a + 1)) ] \leq \exp((2\kappa)^a) \mathds{E}^\mathds{P}[ \exp(\| 2x \| ^ a)]
					= \exp((2\kappa)^a) A.
					\end{align*}
					We may then conclude that $\widetilde{A}$ is finite by virtue of the light-tail assumption. The finiteness of $\widetilde{A}$ enables us to invoke Theorem 2 of \cite{fournier2013rate}, which states that
					\begin{align} \label{finite:sample:guarantee}
					\mathds P^N \big\lbrace \mathds{P} \in \mathds{B}_\varepsilon(\hat{\mathds{P}}_N) \big\rbrace \geq 1-\eta,
					\end{align}
					for any $\eta \in (0,1]$, $N \geq 1$, and $\varepsilon_N(\eta)$ defined in \eqref{eps:opt}, where the constants $c_1, c_2,$ and $c_3$ in \eqref{eps:opt} depend only on $a$, $\widetilde{A}$, $n$, and the metric on the feature-label space. Because $\widetilde{A}$ is bounded above by a function of $a$, $A$ and $\kappa$, we may further assume that $c_1, c_2,$ and $c_3$ in \eqref{eps:opt} depend only on $a$, $A$, $n$ and the metric on the feature-label space.
					Thus, the proof follows immediately from \eqref{finite:sample:guarantee}. 
					\end{proof}
					\subsection{Proof of Theorem~\ref{Theorem 3}}
					\begin{proof} 
						The worst-case risk problem \eqref{worst_case} can be interpreted as an instance of \eqref{distrob} with loss function $l_{\hat{\beta}}(x,y)=\mathds{1}_{\lbrace y \langle \hat{\beta} , x \rangle \leq 0 \rbrace}$. Using similar arguments as in the proof of Theorem~\ref{Theorem 1}, one can show that 
						\begin{subequations}
							\begin{align} \label{th3_1}
							\sup\limits _ {\mathds{Q} \in \mathds{B}_\varepsilon(\hat{\mathds{P}}_N)} & \mathds{E}^\mathds{Q}  [\mathds{1}_{\lbrace y \langle \hat{\beta} , x \rangle \leq 0 \rbrace}]~ \notag \\
							&=\begin{cases}
							\inf\limits_{\lambda,s_i}&\lambda\varepsilon+\frac{1}{N}\sum\limits_{i=1}^{N}s_i\\
							\text{s.t.}
							& \sup\limits_{x\in\mathbb{R}^n} \mathds{1}_{\lbrace \langle \hat{\beta} , x \rangle \leq 0 \rbrace} -\lambda\|\hat{x}_i-x\| -\frac{1}{2} \lambda\kappa (1-\hat{y}_i) \leq s_i \qquad \forall i\leq N \\
							& \sup\limits_{x\in\mathbb{R}^n} \mathds{1}_{\lbrace \langle \hat{\beta} , x \rangle \geq 0 \rbrace}-\lambda\|\hat{x}_i-x\| -\frac{1}{2} \lambda\kappa (1+\hat{y}_i) \leq s_i \qquad \forall i\leq N \\
							&\lambda\geq 0,
							\end{cases}
							\end{align}
							see \eqref{important}.
							In order to find a tractable reformulation of \eqref{th3_1}, we represent the indicator loss functions as finite maxima of concave functions, that is, we set
							$$ \mathds{1}_{\lbrace \langle \hat{\beta} , x \rangle \leq 0 \rbrace} = \max\{h_1(x),0\} \quad \text{and} \quad \mathds{1}_{\lbrace \langle \hat{\beta} , x \rangle \geq 0 \rbrace} = \max\{h_2(x),0\}, $$
							where
							$$
							h_1(x) = \begin{cases}
							1 \qquad \quad \langle \hat{\beta}, x \rangle \leq 0, \\
							-\infty \qquad \text{otherwise},
							\end{cases} \text{ and} \quad h_2(x) = \begin{cases}
							1 \qquad \quad \langle \hat{\beta}, x \rangle \geq 0 ,\\
							-\infty \qquad \text{otherwise.} 
							\end{cases}
							$$
							This allows us to reformulate \eqref{th3_1} as
							\begin{align*}
							=\begin{cases}
							\inf\limits_{\lambda,s_i}&\lambda\varepsilon+\frac{1}{N}\sum\limits_{i=1}^{N}s_i\\
							\text{s.t.}
							& \sup\limits_{x\in\mathbb{R}^n} h_1(x) -\lambda\|\hat{x}_i-x\| -\frac{1}{2}\lambda\kappa (1-\hat{y}_i) \leq s_i \qquad \forall i\leq N \\
							& \sup\limits_{x\in\mathbb{R}^n} 0 -\lambda\|\hat{x}_i-x\| -\frac{1}{2}\lambda\kappa (1-\hat{y}_i) \leq s_i \qquad \hspace{0.7cm} \forall i\leq N \\
							& \sup\limits_{x\in\mathbb{R}^n} h_2(x) -\lambda\|\hat{x}_i-x\| -\frac{1}{2}\lambda\kappa (1+\hat{y}_i) \leq s_i \qquad \forall i\leq N \\
							& \sup\limits_{x\in\mathbb{R}^n} 0 -\lambda\|\hat{x}_i-x\| -\frac{1}{2}\lambda\kappa (1+\hat{y}_i) \leq s_i \qquad \hspace{0.7cm} \forall i\leq N \\
							&\lambda\geq 0.
							\end{cases}
							\end{align*}
							Using the definition of the dual norm and applying the duality theorem \cite[Proposition~5.5.4]{bertsekas2009convex}, we find
							\begin{align} \label{th3_2}
							=\begin{cases}
							\inf\limits_{\lambda,s_i,p_i,q_i}&\lambda\varepsilon+\frac{1}{N}\sum\limits_{i=1}^{N}s_i\\
							\quad \text{s.t.}
							& \sup\limits_{x\in\mathbb{R}^n} h_1(x) + \langle p_i, x \rangle - \langle p_i, \hat{x}_i \rangle -\frac{1}{2}\lambda\kappa (1-\hat{y}_i) \leq s_i \qquad \forall i\leq N \\
							& \sup\limits_{x\in\mathbb{R}^n} h_2(x) + \langle q_i, x \rangle - \langle q_i, \hat{x}_i \rangle -\frac{1}{2}\lambda\kappa (1+\hat{y}_i) \leq s_i \qquad \forall i\leq N \\
							& \|p_i\|_* \leq \lambda, \|q_i\|_* \leq \lambda \\
							&\lambda,s_i\geq 0,
							\end{cases}
							\end{align}
							where
							\begin{align} \label{th3_3}
							\sup\limits_{x\in\mathbb{R}^n} ~ h_1(x) + \langle p_i, x \rangle =\begin{cases}
							\sup\limits_{x\in\mathbb{R}^n} ~ & 1 + \langle p_i, x \rangle \\
							~\text{s.t.} & \langle \hat{\beta}, x \rangle \leq 0
							\end{cases} = \begin{cases}
							\min\limits_{r_i} ~ & 1 \\
							~\text{s.t.} & r_i \hat{\beta} = p_i \\
							& r_i \geq 0,
							\end{cases}
							\end{align} 
							and 
							\begin{align} \label{th3_4}
							\sup\limits_{x\in\mathbb{R}^n} ~ h_2(x) + \langle q_i, x \rangle =\begin{cases}
							\sup\limits_{x\in\mathbb{R}^n} ~ & 1 + \langle q_i, x \rangle \\
							~\text{s.t.} & \langle \hat{\beta}, x \rangle \geq 0
							\end{cases} = \begin{cases}
							\min\limits_{t_i} ~ & 1 \\
							~\text{s.t.} & t_i \hat{\beta} = -q_i \\
							& t_i \geq 0. 
							\end{cases}
							\end{align}
							Substituting \eqref{th3_3} and \eqref{th3_4} into \eqref{th3_2} yields
							\begin{align*} 
							\begin{cases}
							\min\limits_{\lambda,s_i,r_i,t_i}&\lambda\varepsilon+\frac{1}{N}\sum\limits_{i=1}^{N}s_i\\
							\quad \text{s.t.}
							& 1 - r_i \langle \hat{\beta}, \hat{x}_i \rangle -\frac{1}{2}\lambda\kappa (1-\hat{y}_i) \leq s_i \qquad \forall i\leq N \\
							& 1 + t_i \langle \hat{\beta}, \hat{x}_i \rangle -\frac{1}{2}\lambda\kappa (1+\hat{y}_i) \leq s_i \qquad \forall i\leq N \\
							& r_i \|\hat{\beta}\|_* \leq \lambda \\ & t_i \|\hat{\beta}\|_* \leq \lambda \\
							&r_i,t_i,s_i\geq 0,
							\end{cases}
							\end{align*}
							which is equivalent to
							\begin{align}
							\begin{cases}
							\min\limits_{\lambda,s_i,r_i,t_i} ~ & \lambda \varepsilon + \frac{1}{N} \sum\limits_{i=1}^N s_i \\
							\quad \text{s.t.} 
							& 1 - r_i \hat{y_i} \langle \hat{\beta} , \hat{x}_i \rangle  \leq s_i \hspace{1.3cm} \forall i \leq N\\
							& 1 + t_i \hat{y_i} \langle \hat{\beta} , \hat{x}_i \rangle - \lambda	\kappa \leq s_i \hspace{0.5cm} \forall i \leq N\\
							& r_i \|\hat{\beta}\|_* \leq \lambda \\
							& t_i \|\hat{\beta}\|_* \leq \lambda \\
							& r_i, t_i, s_i \geq 0,
							\end{cases}
							\end{align}
							and thus the first claim follows. The best-case risk can be rewritten as $\mathfrak{R}_{\min}(\hat{\beta}) = 1 - \sup _ {\mathds{Q} \in \mathds{B}_\varepsilon(\hat{\mathds{P}}_N)} \mathds{E}^\mathds{Q}  [\mathds{1}_{\lbrace y \langle \hat{\beta} , x \rangle \geq 0 \rbrace}]$, and the equivalence to the linear program \eqref{best_case} can be proved in a similar fashion. The interpretation of $\mathfrak{R}_{\max}(\hat{\beta})$ and $\mathfrak{R}_{\min}(\hat{\beta})$ as confidence bounds follows immediately from Theorem~\ref{Theorem 2}.
							\end{subequations}
							\end{proof}
	
	\begin{flushleft}
		{\bfseries Acknowledgments:} This research was supported by the Swiss National Science Foundation under grant BSCGI0\_157733.
	\end{flushleft}
	\bibliography{Bibl}
	\bibliographystyle{abbrv}
	
\end{document}